\newtheorem{theorem}{Theorem}
\newtheorem{lemma}{Lemma}
\newtheorem{Conjecture}{Conjecture}
\newtheorem{Proposition}{Proposition}
\newtheorem{Problem}{Problem}
\begin{document}
\author{G. Tephnadze}
\title[convergence]{Convergence and Strong Summability of the Two--dimensional Vilenkin-Fourier Series}
\address{G. Tephnadze, The University of Georgia, School of Science and Technology, 77a Merab Kostava St, Tbilisi, 0128, Georgia}
\email{g.tephnadze@ug.edu.ge }
\date{}

\thanks{The research was supported by Shota Rustaveli National Science Foundation grant YS-18-043.}
\maketitle
	
\begin{abstract}
In this paper we investigate convergence and strong summability of the two-dimensional Vilenkin-Fourier series in the martingale Hardy spaces.
\end{abstract}
	
\textbf{2010 Mathematics Subject Classification.} 42C10, 42B25.
	
\textbf{Key words and phrases:} Vilenkin systems, Strong convergence,
	martingale Hardy space.
	
\section{Introduction}
	
The definitions and notations used in this introduction can be found in our
next Section. It is known \cite[p. 125]{G-E-S} that the two-dimensional Vilenkin systems are not Schauder bases in $L_1\left(G^2_m\right).$ Moreover, (see \cite{AVD} and \cite{S-W-S}) there exists a function $f\in H_{p}^{\square }\left(G_m^2\right),$ such that the corresponding partial sums are not bounded in $L_p\left(G_m^2\right),$  for all $0<p\leq 1.$ 

However, Weisz \cite{We} proved that
if $\alpha \geq 0$, $0<p\leq1$ and $f\in H_p\left(
G^2_m\right),$ then there exists an absolute constant $c_{p},$ depending only
on $p,$ such that
\begin{equation} \label{thtwo}
\underset{n,m\geq 2}{\sup }\left( \frac{1}{\log n\log m}\right)^{\left[ p\right] }\underset{2^{-\alpha }\leq k/l\leq 2^{\alpha },\text{ }\left(k,l\right) \leq \left( n,m\right) }{\sum }\frac{\left\Vert
S_{k,l}f\right\Vert _{p}^{p}}{\left( kl\right) ^{2-p}}\leq c_{p}\left\Vert f\right\Vert_{H_p^{\square}\left(G^2_m\right)}^{p},
\end{equation}
where $\left[ p\right] $ denotes the integer part of $p.$ Moreover, in \cite{tep9} it was proved that the rate of sequence $\left(kl\right)^{2-p} \ (0<p<1)$ in inequality (\ref{thtwo})  can not be improved, which gives sharpness for $\alpha>0$.

In the case when $\alpha =0$ and $0<p\leq 1$ it follows that if $f\in H_{p}\left( G^{2}_m\right),$ then there exists
an absolute constant $c,$ such that 
\begin{equation} \label{1}
\underset{n\geq 2}{\sup }\frac{1}{\log ^{2[p]}n}\underset{k=0}{\sum^{n}}\frac{\left\Vert
S_{k,k}f\right\Vert_p^p}{k^{4-2p}}\leq c\left\Vert f\right\Vert _{H_{p}^{\square}\left(G^2_m\right) }.  
\end{equation}

For the two-dimensional Walsh-Fourier series Goginava and Gogoladze \cite{gg} for $p=1$ and Tephnadze \cite{tep1} for $0<p<1$ generalized inequality \eqref{1} and proved that there exists an absolute constant $c_{p}$, depending only on $p,$ such that
\begin{equation}\label{2cc}
\sum\limits_{n=1}^{\infty }\frac{\left\Vert S_{n,n}f\right\Vert _{p}}{%
n^{3-2p}\log ^{2\left[ p\right] }\left( n+1\right) }\leq c_{p}\left\Vert f\right\Vert_{H_{p}^{\square}\left(G^2_m\right) },  
\end{equation}%
for all $f\in H_{p}^{\square }\left( G_{m}^{2}\right) $, where $\left[ p\right] $ denotes integer part of $p.$
Moreover, in \cite{tep1} and \cite{tep2} there were proved that the sequence $\left\{ 1/k^{3-2p}\log ^{2\left[ p\right] }\left( k+1\right) :k\in \mathbb{N}\right\} $ in inequality (\ref{2cc}) is sharp.

Strong summability of the one and two-dimensional Vilenkin-Fourier (Walsh-Fourier) series  can be found in Baramidze, Persson, Tephnadze and Wall \cite{BPTW}, Blahota \cite{b}, \cite{BNPT}, Belinskii \cite{Be1}, Gát  \cite{gat1}, Goginava and Gogoladze \cite{GG1},
Smith \cite{sm}, Simon \cite{si1}, \cite{Si2},   Blahota and Tephnadze  \cite{B-T}, \cite{B-T-T},  Tephnadze  \cite{tep5}, Tutberidze \cite{tut1}.

Concerning approximation properties of Fourier series in the classical and real Hardy spaces only few results are known. We refer to the papers by Oswald \cite{Os}, Kryakin and Trebels \cite{KT}, Storoienko \cite{St1}, \cite{St2}. For martingale Hardy spaces approximation properties of some general summability methods were investigated in Fridli, Manchanda and Siddiqi \cite{FMS} (see also \cite{4}, \cite{FR}), Tephnadze \cite{tep4}, \cite{tep6}, \cite{tep7}, \cite{tep8}, Nagy \cite{n}, \cite{n1}, \cite{nagy}, Weisz \cite{We2}, \cite{we4}. In \cite{ptw2} it was proved that if $0<p<1,$  $2^{-\alpha }\leq m/n\leq 2^{\alpha }$ and
\begin{equation*} \label{10Atwo}
\omega_{H_p^{\square}(G^2_m)} \left( \frac{1}{2^k},f\right)=o\left(\frac{1}{2^{
k(2/p-2) }}\right),\text{ as \ }k\rightarrow \infty,
\end{equation*}
then
\begin{equation*}
\left\Vert S_{m,n}f-f\right\Vert_{H_p^{\square}(G^2_m)}\rightarrow
0,\text{ as }m,n\rightarrow\infty .
\end{equation*}

Moreover, there exists a martingale $f\in H_{p}(G^2_m),$ such that 
\begin{equation*}
\omega_{H_p(G^2_m)} \left(\frac{1}{2^{k}},f\right) =O\left( \frac{1}{{2}^{k(2/p-2)}}\right),\text{ as \ }k\rightarrow \infty
\end{equation*}
and
\begin{equation*}
\left\Vert S_{m,n}f-f\right\Vert_{weak-L_p(G^{2}_m)}\nrightarrow
0 \ \text{ as  }m,n\rightarrow \infty .
\end{equation*}
$0<p<1$ and $2^{-\alpha}<m/n\leq 2^{\alpha}.$
	
The main aim of this paper is to generalize inequality (\ref{thtwo})  for bounded Vilenkin systems. We also prove that the sequence $\left( kl\right)^{2-p}$ in inequality (\ref{thtwo}) can not be improved. Moreover, we find necessary and sufficient conditions for modulus of continuity of the two-dimensional Vilenkin-Fourier series, which provide convergence of partial sums in $H^{\square}_p(G^{2}_m)$ norm.
	
This paper is organized as follows: in order not to disturb our discussions later on some definitions and notations and some important propositions are presented in Section 2. The main results with proofs can be found in Section 3. Moreover, in Section 4 we will state some interesting open problems and conjectures of this research area.
	
\section{Preliminary}
	
Denote by $\mathbb{N}_{+}$ the set of positive integers, $\mathbb{N}:=%
\mathbb{N}_{+}\cup \{0\}.$ Let $m:=(m_{0,}$ $m_{1},...)$ be a sequence of positive integers not less than 2. Denote by
$Z_{m_{k}}:=\{0,1,\ldots ,m_{k}-1\}$
the additive group of integers modulo $m_{k}$.
	
Define the group $G_{m}$ as the complete direct product of the groups $Z_{m_{i}}$ with the product of the discrete topologies of $Z_{m_{j}}`$s.
	
In this paper we discuss bounded Vilenkin groups,\ i.e. in the case when $\sup_{n\in \mathbb{N}}m_{n}<\infty.$
	
The direct product $\mu $ of the measures
\begin{equation*}
\mu _{k}\left( \{j\}\right) :=1/m_{k},\ (j\in Z_{m_{k}})
\end{equation*}%
is the Haar measure on $G_{m_{\text{ }}}$ with $\mu \left( G_{m}\right) =1.$
	
The elements of $G_{m}$ are represented by sequences
\begin{equation*}
x:=\left( x_{0},x_{1},\ldots ,x_{j},\ldots \right) ,\ \left( x_{j}\in
Z_{m_{j}}\right) .
\end{equation*}
	
It is easy to give a base of neighbourhoods of $G_{m}:$
\begin{equation*}
\text{ }I_{0}\left( x\right) :=G_{m},\text{ \ }I_{n}(x):=\{y\in G_{m}\mid
y_{0}=x_{0},\ldots ,y_{n-1}=x_{n-1}\}\,\,\left( x\in G_{m},\text{ }n\in
\mathbb{N}\right) .
\end{equation*}
	
Denote $I_{n}:=I_{n}\left( 0\right) ,$ for $n\in \mathbb{N}_{+}$ and
$
e_{n}:=\left( 0,\ldots ,0,x_{n}=1,0,\ldots \right) \in G_{m},\text{ \ }\left( n\in \mathbb{N}\right) .
$
	
It is evident that
\begin{equation} \label{2}
\overline{I_{N}}=\overset{N-1}{\underset{s=0}{\bigcup }}I_{s}\backslash
I_{s+1}.  
\end{equation}

If we define the so-called generalized number system based on $m$ in the following way :
\begin{equation*}
M_{0}:=1,\ M_{k+1}:=m_{k}M_{k}\,\,\,\ \ (k\in \mathbb{N}),
\end{equation*}%
then every $n\in \mathbb{N}$ can be uniquely expressed as
\begin{equation*}
n=\sum_{j=0}^{\infty }n_j M_j,\text{ \ where \ \ }n_j\in Z_{m_j}\text{\ }(j\in \mathbb{N})
\end{equation*}%
and only a finite number of $n_{j}`$s differ from zero. Let $|n|$ denote the
largest integer $j$ for which $n_{j}\neq 0$.
	
Denote by $\mathbb{N}_{n_{0}}$ the subset of positive integers $\mathbb{N}_{+},$ for which $n_{0}=1.$ Then for every $n\in \mathbb{N}_{n_{0}},$ $M_{k}<n<$ where $M_{k+1}$ can be written as
\begin{equation*}
n=1+\sum_{j=1}^k n_j M_j,
\end{equation*}
where
$n_{j}\in \left\{ 0,\ldots {},m_{j}-1\right\}, \ \ \ (j=1,\ldots,k-1)\text{ and \ }n_{k}\in \left\{ 1,\ldots {},m_{k}-1\right\}.$
	
For any $\alpha>0$ we get that
\begin{equation} \label{1atwo}
\underset{\left\{ n:M_{k}\leq n\leq 2^{\alpha}M_{k},\text{ }n\in \mathbb{N}_{n_{0}}\right\} }{\sum }1=\frac{2^{\alpha}M_{k}-M_{k}}{m_{0}}\geq cM_{k},
\end{equation}%
where $c$ is an absolute constant.
	
The norms (quasi-norm) of the spaces $L_{p}(G_{m}^{2})$ and $weak-L_{p}\left( G_{m}^{2}\right) $  are respectively defined by 
\begin{equation*}
\left\Vert f\right\Vert _{p}:=\left( \int_{G_{m}^{2}}\left\vert f\right\vert^{p}d\mu \times d\mu \right) ^{1/p},\ \ \ \ \ \ \  \left\Vert f\right\Vert _{weak-L_{p}}:=\underset{\lambda >0}{\sup }\lambda
\mu \left( f>\lambda \right) ^{1/p} \ \ \ \ \ \ 
\left( 0<p<\infty \right) .
\end{equation*}
	
Next, we introduce on $G^2_m$ an orthonormal system which is called the
Vilenkin system.
	
At first, we define the complex-valued function $r_{k}\left( x\right)
:G_{m}\rightarrow \mathbb{C},$ the generalized Rademacher functions by
\begin{equation*}
r_{k}\left( x\right) :=\exp \left( 2\pi ix_{k}/m_{k}\right),\text{ }\left(i^{2}=-1,x\in G_{m},\text{ }k\in \mathbb{N}\right).
\end{equation*}
	
Now, define the Vilenkin system $\psi :=(\psi_{n}:n\in\mathbb{N})$ on $G_{m}$ as:
\begin{equation*}
\psi _{n}(x):=\prod\limits_{k=0}^{\infty }r_{k}^{n_{k}}\left( x\right)
,\ \ \left( n\in\mathbb{N}\right).
\end{equation*}

We define the two-dimesional Vilenkin system as a kronecker product of two Vilenkin systems. The Vilenkin system is orthonormal and complete in $L_2\left( G_m^2\right)$ (for details see \cite{AVD} and \cite{Vi}).
Specifically, we call this system the Walsh-Paley system, when $m\equiv 2.$

The rectangular partial sum of the 2-dimensional Vilenkin-Fourier series of function $f\in L_{2}\left( G_{m}^{2}\right) $ is defined as follows:
\begin{equation*}
S_{M,N}f\left( x,y\right) :=\sum\limits_{i=0}^{M-1}\sum\limits_{j=0}^{N-1}%
\widehat{f}\left( i,j\right) \psi _{i}\left( x\right) \psi _{j}\left(
y\right) ,
\end{equation*}
where the numbers
\begin{equation*}
\widehat{f}\left( i,j\right) =\int\limits_{G_m^2}f\left(x,y\right) \bar{\psi}_i\left(x\right)\bar{\psi}_{j}\left( y\right) d\mu \left(x\right)d\mu\left(y\right)
\end{equation*}
is said to be the $\left( i,j\right) $-th Vilenkin-Fourier coefficient of the function \thinspace $f.$
	
It is well-known that (for details see e.g. \cite{S-W-S}) 
\begin{equation*}
S_{M,N}f\left( x,y\right)=\int_{G^2_m}f(x,y)D_M(x-t)D_N(y-s)d\mu \left(x\right)d\mu \left(y\right),
\end{equation*}
where
$$D_n(x)=\sum\limits_{i=0}^{n-1}\psi_i(x)$$
is called as $n$-th Dirichlet Kernel. Recall that
\begin{equation} \label{1dn}
D_{M_n}\left(x\right)=\left\{
\begin{array}{ll}
M_{n}, & \text{if}\ \ x\in I_n, \\
0, & \text{if}\ \ x\notin I_n.
\end{array}%
\right.  
\end{equation}

It is also known that (see \cite{AVD})
\begin{equation} \label{2dn}
D_{sM_{n}}=D_{M_{n}}\sum_{k=0}^{s-1}\psi
_{kM_{n}}=D_{M_{n}}\sum_{k=0}^{s-1}r_{n}^{k} \ \ \ \ \ \ \
\text{and} \ \ \ \ \ \ \
D_n=\psi_n\left( \sum_{j=0}^{\infty}D_{M_j} \sum_{u=m_j-n_j}^{m_j-1}r_j^u\right).  
\end{equation}

Moreover, the following estimation holds true: 
\begin{equation} \label{dn2.6}
\int_{I_{N}}\left\vert D_{n}\left( x-t\right) \right\vert d\mu \left(
t\right) \leq \frac{cM_{s}}{M_{N}}, \ \ \ \ \ x\in I_{s}\backslash I_{s+1}, \ \ s=0,...,N-1.
\end{equation}

For our investigation we need the following estimates of the two-dimensional Dirichlet kernels of independent interest:
\begin{lemma}\label{dntwo2.1}\bigskip Let $m,n\in \mathbb{N}$. Then, for every $ 0<\varepsilon\leq 1$, there exists an absolute constant $c,$ such that
	\begin{equation*}
	\int_{I_{N}\times I_{N}}\left\vert D_{m}\left(x-t\right)D_{n}\left(
	y-s\right) \right\vert d\mu \left( t\right)d\mu \left( s\right) \leq \frac{cm^{\varepsilon}M_{s}}{M_N^{1+\varepsilon}}, \ \ \ (x,y)\in {I_N}\times\left(I_{s}\backslash
	I_{s+1}\right), \ \ s=0,...,N-1
	\end{equation*}
	and 
	\begin{equation*}
	\int_{I_{N}\times I_{N}}\left\vert D_{m}\left( x-t\right)D_{n}\left(
	y-s\right) \right\vert d\mu \left( t\right)d\mu \left( s\right) \leq \frac{cn^{\varepsilon}M_{s}}{M_{N}^{1+\varepsilon}}, \ \ \ (x,y)\in \left(I_{s}\backslash
	I_{s+1}\right)\times{I_N}, \ \ \ s=0,...,N-1.
	\end{equation*}
	
	Moreover, let $(x,y)\in \left(I_{s_1}\backslash
	I_{s_1+1}\right)\times \left(I_{s_2}\backslash I_{s_2+1}\right),$ $%
	s_1,s_2=0,...,N-1.$ Then there exists an absolute constant $c,$ such that
	\begin{equation*}
	\int_{I_{N}}\left\vert D_{n}\left( x-t\right)D_{m}\left( y-s\right)
	\right\vert d\mu \left( t\right)d\mu \left( s\right) \leq \frac{cM_{s_1}M_{s_2}}{M_{N}^2}.
	\end{equation*}
\end{lemma}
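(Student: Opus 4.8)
The plan is to reduce each of the three estimates to the single-variable bound \eqref{dn2.6} via Fubini, since the integrand factors as a product of one-dimensional Dirichlet kernels. For the first inequality, I would first integrate out the $t$-variable over all of $I_N$, controlling $\int_{I_N}|D_m(x-t)|\,d\mu(t)$, and integrate out the $s$-variable separately, using that $(x,y)\in I_N\times(I_s\backslash I_{s+1})$. The key is to get an $M_s/M_N^{1+\varepsilon}$ factor together with a power $m^\varepsilon$ out of the $D_m$ integral.

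The main device for introducing the $\varepsilon$-power should be an interpolation-type or Hölder trick on the $t$-integral: one has the trivial $L_1$-type bound $\int_{I_N}|D_m(x-t)|\,d\mu(t)\le cM_{|m|}/M_N\cdot(\text{something})$ coming from \eqref{2dn} and \eqref{1dn}, where $M_{|m|}\le m$, while a cruder bound gives a factor $1/M_N$. Interpolating these two, or equivalently writing $\int_{I_N}|D_m|\le \big(\int_{I_N}|D_m|\big)^{1-\varepsilon}\big(\int_{I_N}|D_m|\big)^{\varepsilon}$ and estimating one factor by $c/M_N$ and the other by $cm/M_N$, should produce the desired $m^\varepsilon/M_N^{1+\text{(shift)}}$ shape. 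Meanwhile the $y$-integral contributes the $M_s$ factor directly from \eqref{dn2.6}, since $y\in I_s\backslash I_{s+1}$ gives $\int_{I_N}|D_n(y-s)|\,d\mu(s)\le cM_s/M_N$. Multiplying these and collecting powers should give exactly $cm^\varepsilon M_s/M_N^{1+\varepsilon}$. The second inequality follows by the obvious symmetry in the roles of $(m,x)$ and $(n,y)$, exchanging which variable carries the $I_s\backslash I_{s+1}$ localization.

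For the third inequality, both variables are now localized, $x\in I_{s_1}\backslash I_{s_1+1}$ and $y\in I_{s_2}\backslash I_{s_2+1}$, so one applies \eqref{dn2.6} directly to each factor: $\int_{I_N}|D_n(x-t)|\,d\mu(t)\le cM_{s_1}/M_N$ and $\int_{I_N}|D_m(y-s)|\,d\mu(s)\le cM_{s_2}/M_N$, and the product gives $cM_{s_1}M_{s_2}/M_N^2$ immediately, with no need for the $\varepsilon$-interpolation since no factor of $m$ or $n$ appears.

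The step I expect to be the main obstacle is the first one: honestly verifying that interpolating between the two single-variable bounds yields the precise exponent $1+\varepsilon$ on $M_N$ (rather than some off-by-one shift) and the clean $m^\varepsilon$ rather than $M_{|m|}^\varepsilon$. This requires care in bounding $\int_{I_N}|D_m(x-t)|\,d\mu(t)$ uniformly for $x\in I_N$ using the kernel formula \eqref{2dn}, and in checking that the geometric decomposition \eqref{2} of $\overline{I_N}$ is compatible with the localization of $y$ in $I_s\backslash I_{s+1}$. I would double-check the exponent bookkeeping against the known Walsh special case before committing to the constants.
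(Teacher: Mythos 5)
Your overall architecture --- Fubini factorization into two one--dimensional integrals, the bound \eqref{dn2.6} for the variable localized in $I_s\backslash I_{s+1}$, and a direct product of two such bounds for the third estimate --- is exactly the paper's approach, and your treatment of the second and third inequalities is correct. The gap is in the one step you yourself flag as the main obstacle: the estimate of $\int_{I_N}\vert D_{m}(x-t)\vert\,d\mu(t)$ for $x\in I_N$. Your proposed interpolation at the level of the integral, writing $\int_{I_N}\vert D_m\vert\le\bigl(\int_{I_N}\vert D_m\vert\bigr)^{1-\varepsilon}\bigl(\int_{I_N}\vert D_m\vert\bigr)^{\varepsilon}$ and estimating one factor by $c/M_N$, rests on the claim that $\int_{I_N}\vert D_{m}(x-t)\vert\,d\mu(t)\le c/M_N$ uniformly in $m$, and that is false: already for $m=M_K$ with $K>N$ one has $\int_{I_N}D_{M_K}\,d\mu=M_K\,\mu(I_K)=1$, and in general \eqref{2dn} only yields a bound of order $1+\log_{+}(m/M_N)$, which is not even bounded by an absolute constant. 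Consequently the interpolation of the two integral bounds cannot produce the factor $cm^{\varepsilon}/M_N^{\varepsilon}$ that the first inequality requires.

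The interpolation must instead be performed pointwise, inside the integral, and the role of $\varepsilon>0$ is to make a series converge, not to trade off two finite bounds. Write $\vert D_m\vert=\vert D_m\vert^{\varepsilon}\vert D_m\vert^{1-\varepsilon}\le m^{\varepsilon}\vert D_m\vert^{1-\varepsilon}$, decompose $I_N=\bigcup_{\ell\ge N}\left(I_{\ell}\backslash I_{\ell+1}\right)$, use $\vert D_m(z)\vert\le cM_{\ell}$ for $z\in I_{\ell}\backslash I_{\ell+1}$ (a consequence of \eqref{2dn}), and sum:
\begin{equation*}
\int_{I_{N}}\left\vert D_{m}\left(x-t\right)\right\vert d\mu\left(t\right)\le m^{\varepsilon}\sum_{\ell=N}^{\infty}M_{\ell}^{1-\varepsilon}\,\mu\left(I_{\ell}\backslash I_{\ell+1}\right)\le c\,m^{\varepsilon}\sum_{\ell=N}^{\infty}M_{\ell}^{-\varepsilon}\le\frac{c\,m^{\varepsilon}}{M_{N}^{\varepsilon}},
\end{equation*}
where the last step uses $M_{\ell}\ge 2^{\ell-N}M_N$, so the series is geometric precisely because $\varepsilon>0$ (for $\varepsilon=0$ it diverges, which is why no $\varepsilon$-free bound of the form $c/M_N$ can exist). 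Multiplying this by $\int_{I_N}\vert D_n(y-s)\vert\,d\mu(s)\le cM_{s}/M_N$ from \eqref{dn2.6} gives exactly $cm^{\varepsilon}M_{s}/M_{N}^{1+\varepsilon}$ with no exponent shift and with the clean $m^{\varepsilon}$; the rest of your plan then goes through unchanged.
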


\begin{proof} 
	Since $\left\vert D_m (x)\right\vert \leq m$ and 
	$\left\vert D_m(x) \right\vert \leq M_{s}, \ \ \text{for } \ \ x\in I_s \backslash I_{s+1},$ by using  \eqref{dn2.6} we obtain that
\begin{eqnarray*}
&&\int_{I_{N}}\left\vert D_{m}\left( x-t\right) \right\vert d\mu
\left(t\right)\leq m^{\varepsilon}\overset{\infty}{\underset{s=N}{
\sum }} \int_{I_{s}\backslash I_{s+1}}\left\vert D_{m}\left( x-t\right) \right\vert ^{1-\varepsilon}d\mu \left( t\right) \\
&\leq&c m^{\varepsilon}\overset{\infty}{\underset{s=N}{
\sum }}\int_{I_{s}\backslash I_{s+1}}M_{s}^{1-\varepsilon}d\mu \left( t\right) \leq c m^{\varepsilon}\overset{\infty}{\underset{s=N}{\sum }}M_{s}^{-\varepsilon}\leq\frac {cm^{\varepsilon}}{M_{N}^{\varepsilon}}.
	\end{eqnarray*}
	
Therefore, by using inequality \eqref{dn2.6} we obtain that 
\begin{eqnarray*}
&&\int_{I_{N}\times I_{N}}\left\vert D_{m}\left( x-t\right)D_{n}\left(
y-s\right) \right\vert d\mu \left(t\right)d\mu \left(s\right) \\
&\leq & \int_{I_N}\left\vert D_m\left(x-t\right) \right\vert d\mu
\left(t\right)\int_{I_N}\left\vert D_{n}\left( y-s\right) \right\vert d\mu\left(s\right)\leq  \frac{cm^{\varepsilon}M_{s}}{M_{N}^{1+\varepsilon}}.
	\end{eqnarray*}

The proof of the second estimation is quite analogical to the proof of Lemma \ref{dntwo2.1}. So, we leave out the details.
	
To prove the third estimate we apply inequality \eqref{dn2.6} to obtain that 
\begin{eqnarray*}
&&\int_{I_N\times I_N}\left\vert D_m\left( x-t\right)D_n\left(
y-s\right)\right\vert d\mu \left(t\right)d\mu\left(s\right) \\
&\leq & \int_{I_N}\left\vert D_m\left( x-t\right) \right\vert d\mu
\left(t\right)\int_{I_{N}}\left\vert D_n\left( y-s\right) \right\vert d\mu\left(s\right)\leq \frac{cM_{s_1}M_{s_2}}{M_{N}^2}.
\end{eqnarray*}
	The proof is complete.
\end{proof}
	
We also consider the following maximal operators 
$ \widetilde{S}^{\ast ,p} $ and $\widetilde{S}_{\#}^{\ast}$ defined by
\begin{equation} \label{wemax}
\widetilde{S}^{\ast ,p}f =\sup_{m,n\geq 1}
\frac{\left\vert {S_{m,n}f}\right\vert}{{(m+n)}^{2/p-2}}.
\end{equation}
and
\begin{equation*}
\widetilde{S}_{\#}^{\ast}
:=\sup_{n\in \mathbb{N}}\left\vert S_{M_n,M_n}
\right\vert .
\end{equation*}

The $\sigma $-algebra generated by the 2-dimensional $I_{n}\left( x\right)\times I_{n}\left( y\right) $ square of measure $M_{n}^{-1}\times M_{n}^{-1}$
will be denoted by $\digamma _{n,n}\left( n\in \mathbb{N}\right) .$ Denote by $f=\left( f_{n,n}\text{ }n\in \mathbb{N}\right) $ one-parameter martingales with respect to $\digamma _{n,n}\left( n\in \mathbb{N}\right)$ (for details see e.g. \cite{Ne}, \cite{cw}, \cite{Webook1} and \cite{Webook2}).
	
The maximal function of a martingale $f$ \ is defined by
$
f^{\ast }=\sup_{n\in \mathbb{N}}\left\vert f_{n,n}\right\vert .
$
	
Let $f\in L_{1}\left( G_{m}^{2}\right) $. Then the maximal function is given by
\begin{equation*}
f^{\ast }\left( x,y\right) =\sup\limits_{n\in \mathbb{N}}\frac{1}{\mu \left(I_{n}(x)\times I_{n}(y)\right) }\left\vert \int\limits_{I_n(x) \times I_n(y)}f\left( s,t\right) d\mu \left( s\right)d\mu \left( t\right) \right\vert, \ \ \ \ \ \text{where} \ \ \ \left( x,y\right) \in G_{m}^{2}.
\end{equation*}

The two-dimensional Hardy space $H_{p}^{\square }(G_{m}^{2})$ $\left(
0<p<\infty \right) $ consists of all martingale for which
\begin{equation*}
\left\Vert f\right\Vert _{H_{p}^{\square}}:=\left\Vert f^{\ast }\right\Vert_{p}<\infty .
\end{equation*}
	
If $f\in L_{1}\left( G_{m}^{2}\right) ,$ then it is easy to show that the sequence $\left( S_{M_{n},M_{n}}f :n\in \mathbb{N}\right) $ is a martingale. If $f=\left( f_{n,n},n\in \mathbb{N}\right) $ is a martingale, then the Vilenkin-Fourier coefficients must be defined in a slightly different manner: $\qquad \qquad $
\begin{equation*}
\widehat{f}\left( i,j\right) :=\lim_{k\rightarrow \infty
}\int_{G_{m}^{2}}f_{k,k}\left( x,y\right) \overline{\psi }_{i}\left(
x\right) \overline{\psi }_j\left( y\right) d\mu \left( x\right)d\mu \left(y\right).
\end{equation*}

It is known (for details see e.g. Weisz \cite{Webook1}) that the following holds true for the bounded two-dimensional Vilenkin-Fourier series:

\begin{Proposition}  \label{lemma3.2.3dim2}
Let $g\in L_{1}\left( G^2_m\right)$ and $f:=(E_{n}g:n\in \mathbb{N})$ be regular martingale. Then the Vilenkin-Fourier coefficients of \ \ $f\in L_{1}\left( G_{m}^{2}\right) $ \ \
are the same as those of the martingale \\ $\left( S_{M_{n},M_{n}}f:n\in
\mathbb{N}\right) $ obtained from $f.$   

Moreover, ${H_{p}^{\square}(G_m^{2})}\text{ }(0<p\leq1)$ norm is calculated by
	\begin{equation*}
	\left\Vert f\right\Vert_{H_{p}^{\square}(G_m^{2})}=\left\Vert \sup\limits_{n\in \mathbb{N}}|S_{M_n,M_n}g|\right\Vert_{p}.
	\end{equation*}
\end{Proposition}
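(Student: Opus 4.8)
The plan is to reduce both assertions of the Proposition to the single identity $E_{n}g=S_{M_{n},M_{n}}g$, where $E_{n}$ denotes the conditional expectation operator with respect to the $\sigma$-algebra $\digamma_{n,n}$. Once this identity is in hand, the regularity hypothesis $f=(E_{n}g)$ together with the definitions gives everything, and no genuine estimation is needed.

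First I would establish the identity. Writing $S_{M_{n},M_{n}}g$ as a convolution against $D_{M_{n}}(x-t)D_{M_{n}}(y-s)$ and invoking the closed form \eqref{1dn}, namely that $D_{M_{n}}$ equals $M_{n}$ on $I_{n}$ and vanishes off $I_{n}$, yields
\[
S_{M_{n},M_{n}}g(x,y)=M_{n}^{2}\int_{I_{n}(x)\times I_{n}(y)}g(t,s)\,d\mu(t)\,d\mu(s).
\]
Since $\mu(I_{n}(x)\times I_{n}(y))=M_{n}^{-2}$ and $\digamma_{n,n}$ is generated by precisely these squares, the right-hand side is the average of $g$ over $I_{n}(x)\times I_{n}(y)$, that is $E_{n}g(x,y)$. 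Hence $f_{n,n}=E_{n}g=S_{M_{n},M_{n}}g$ for every $n$. The norm formula is then immediate: by definition $\left\Vert f\right\Vert_{H_{p}^{\square}}=\left\Vert f^{\ast}\right\Vert_{p}$ with $f^{\ast}=\sup_{n}|f_{n,n}|$, and substituting $f_{n,n}=S_{M_{n},M_{n}}g$ gives exactly $\left\Vert f\right\Vert_{H_{p}^{\square}}=\left\Vert\sup_{n}|S_{M_{n},M_{n}}g|\right\Vert_{p}$.

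For the coefficient assertion I would evaluate the martingale Fourier coefficients directly from their limit definition. Fix $(i,j)$ and choose $k$ so large that $M_{k}>\max(i,j)$; then $\psi_{i}$ and $\psi_{j}$ depend only on the first $k$ coordinates, so $\overline{\psi}_{i}(x)\overline{\psi}_{j}(y)$ is $\digamma_{k,k}$-measurable. Using the adjoint property $\int(E_{k}g)h=\int g\,h$ valid for every $\digamma_{k,k}$-measurable $h$, I obtain
\[
\int_{G_{m}^{2}}f_{k,k}\,\overline{\psi}_{i}\,\overline{\psi}_{j}\,d\mu\,d\mu=\int_{G_{m}^{2}}g\,\overline{\psi}_{i}\,\overline{\psi}_{j}\,d\mu\,d\mu=\widehat{g}(i,j)
\]
for all such $k$. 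Thus the defining limit is eventually constant and $\widehat{f}(i,j)=\widehat{g}(i,j)$; the martingale coefficients coincide with the ordinary Vilenkin-Fourier coefficients, and consequently $S_{M_{n},M_{n}}f$ and $S_{M_{n},M_{n}}g$ are the same.

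The only delicate point is the identification of the martingale $f$ with an honest $L_{1}$ function, which is what makes the phrase ``the Vilenkin-Fourier coefficients of $f\in L_{1}$'' meaningful. Here I would appeal to regularity: since $f=(E_{n}g)$, the martingale convergence theorem gives $f_{n,n}\to g$ almost everywhere and in $L_{1}$-norm, so $f$ may be identified with $g$ and the interchange of limit and integral above is legitimate. I expect this measure-theoretic bookkeeping, rather than any computation, to be the main thing to pin down carefully.
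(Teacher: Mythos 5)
The paper does not actually prove this Proposition: it is quoted verbatim from Weisz's monograph (``for details see e.g.\ Weisz \cite{Webook1}''), so there is no in-paper argument to compare yours against. Judged on its own, your proof is correct and is essentially the standard one. The whole statement does reduce to the Paley-type identity $E_{n}g=S_{M_{n},M_{n}}g$, which you derive correctly from the kernel formula \eqref{1dn} together with $\mu\left(I_{n}(x)\times I_{n}(y)\right)=M_{n}^{-2}$; the norm identity then follows by pure substitution into the definition of $\left\Vert \cdot\right\Vert _{H_{p}^{\square}}$, and the coefficient statement follows from the self-adjointness of $E_{k}$ applied to the $\digamma_{k,k}$-measurable function $\overline{\psi}_{i}\otimes\overline{\psi}_{j}$ (measurable because $i,j<M_{k}$ forces $\psi_{i},\psi_{j}$ to depend only on the first $k$ coordinates). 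Your observation that the defining limit is eventually constant is exactly right and in fact makes the appeal to martingale convergence unnecessary for the coefficient claim; the convergence theorem is only needed to identify the regular martingale $f$ with the function $g$ in the first place, which you also state correctly. I see no gap.
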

	
A bounded measurable function $a$ is a $p$-atom, if there exists a
\thinspace two-dimensional cube $I^{2}=I\times I\mathbf{,}$ such that
\begin{equation*}
\int_{I^{2}}ad\mu =0,\text{ \ \ \ \ \ }\left\Vert a\right\Vert _{\infty
}\leq \mu (I^{2})^{-1/p},\text{ \ \ \ \ \ supp}\left( a\right) \subset I^{2}.
\end{equation*}

\bigskip In order to prove our main results we need the following lemma of Weisz (for details see e.g. Weisz \cite{Webook1})
	
\begin{Proposition}
\label{lemma1} A martingale $f$ is in $H_{p}^{\square }\left( G_{m}^{2}\right) \left( 0<p\leq 1\right) $ if and only if there exist a sequence $\left( a_{k},k\in \mathbb{N}\right) $ of $p$-atoms and a sequence $\left( \mu _{k},k\in \mathbb{N}\right) $ of real numbers such that
\begin{equation}
\qquad \sum_{k=0}^{\infty }\mu _{k}S_{M_{n},M_{n}}a_{k}=f_{n,n},\text{ \ \ \ \ \ a.e.,}  \label{a1}
\end{equation}%
and
\begin{equation*}
\qquad \sum_{k=0}^{\infty }\left\vert \mu _{k}\right\vert ^{p}<\infty ,
\end{equation*}%
Moreover,
\begin{equation*}
\left\Vert f\right\Vert _{H_{p}^{\square}(G_m^{2})}\backsim \inf \left( \sum_{k=0}^{\infty}\left\vert \mu _{k}\right\vert ^{p}\right) ^{1/p},
\end{equation*}
where the infimum is taken over all decompositions of $f$ of the form (\ref{a1}).
\end{Proposition}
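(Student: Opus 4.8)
The plan is to prove the two implications separately, using the fact that although the functions live on $G_m^2$, the relevant filtration $\digamma_{n,n}$ is indexed by the single parameter $n$, so this is genuinely a one-parameter martingale statement. Two facts will be used throughout. First, by \eqref{1dn} the kernel $D_{M_n}$ is $M_n$ times the indicator of $I_n$, so $S_{M_n,M_n}h=E_n h$ is exactly the conditional expectation onto $\digamma_{n,n}$; in particular $S_{M_n,M_n}a_k=(a_k)_{n,n}$ for a martingale atom. Second, for $0<p\le 1$ the functional $h\mapsto\left\Vert h\right\Vert_p^p$ is subadditive, i.e.\ $\left\Vert\sum_j h_j\right\Vert_p^p\le\sum_j\left\Vert h_j\right\Vert_p^p$.

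For sufficiency, suppose $f_{n,n}=\sum_k\mu_k S_{M_n,M_n}a_k$ with $\sum_k|\mu_k|^p<\infty$. I would first establish the uniform bound $\left\Vert a^{\ast}\right\Vert_p\le 1$ for every $p$-atom $a$, where $a^{\ast}=\sup_n|S_{M_n,M_n}a|=\sup_n|E_n a|$. Assuming $a$ is supported on a square $I\times I$ with $\mu(I)=M_N^{-1}$, so that $\left\Vert a\right\Vert_\infty\le M_N^{2/p}$ and $\int_{I\times I}a\,d\mu=0$, the argument splits on $n$. For $n\le N$ the unique $n$-th generation square meeting the support contains it, and the mean of $a$ over it vanishes by cancellation, so $E_n a\equiv 0$; for $n>N$ the partition refines $I\times I$ and $|E_n a|\le\left\Vert a\right\Vert_\infty\mathbf{1}_{I\times I}$. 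Hence $a^{\ast}\le M_N^{2/p}\mathbf{1}_{I\times I}$ and $\left\Vert a^{\ast}\right\Vert_p^p\le M_N^2\,\mu(I\times I)=1$. By sublinearity of the supremum $f^{\ast}\le\sum_k|\mu_k|a_k^{\ast}$, so $p$-subadditivity gives $\left\Vert f\right\Vert_{H_p^{\square}}^p=\left\Vert f^{\ast}\right\Vert_p^p\le\sum_k|\mu_k|^p\left\Vert a_k^{\ast}\right\Vert_p^p\le\sum_k|\mu_k|^p$, which also yields $\left\Vert f\right\Vert_{H_p^{\square}}\le c\inf(\sum_k|\mu_k|^p)^{1/p}$.

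For necessity, given $f\in H_p^{\square}$ I would construct the decomposition by stopping times. For $k\in\mathbb{Z}$ set $\nu_k:=\inf\{n:|f_{n,n}|>2^k\}$, a stopping time adapted to $\digamma_{n,n}$ with $\{\nu_k<\infty\}=\{f^{\ast}>2^k\}$, and let $f^{\nu_k}$ be the stopped martingale. Put $\mu_k:=3\cdot 2^k\,\mu(\{f^{\ast}>2^k\})^{1/p}$ and $a_k:=\mu_k^{-1}(f^{\nu_{k+1}}-f^{\nu_k})$. Telescoping the stopped martingales gives $f_{n,n}=\sum_k\mu_k(a_k)_{n,n}=\sum_k\mu_k S_{M_n,M_n}a_k$ almost everywhere, which is the form \eqref{a1}. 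Each $a_k$ vanishes off $\{\nu_k<\infty\}$, and the martingale difference structure of $f^{\nu_{k+1}}-f^{\nu_k}$ supplies the vanishing integral over the maximal squares on which $\nu_k$ is constant, so after splitting $\{f^{\ast}>2^k\}$ into these squares each $a_k$ reduces to a (sum of) genuine $p$-atom(s). Finally the distribution-function comparison $\sum_k 2^{kp}\mu(\{f^{\ast}>2^k\})\le c\left\Vert f^{\ast}\right\Vert_p^p$ gives $\sum_k|\mu_k|^p\le c\left\Vert f\right\Vert_{H_p^{\square}}^p$, so $\inf(\sum_k|\mu_k|^p)^{1/p}\le c\left\Vert f\right\Vert_{H_p^{\square}}$, completing the stated equivalence.

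The main obstacle is the sup-norm control needed for the atom axiom $\left\Vert a_k\right\Vert_\infty\le c\,\mu(\{\nu_k<\infty\})^{-1/p}$, i.e.\ the uniform estimate $\left\Vert f^{\nu_{k+1}}-f^{\nu_k}\right\Vert_\infty\le c\,2^k$. Before the level crossing the martingale does not exceed $2^k$ in modulus, and the only difficulty is bounding the single jump at time $\nu_k$; this is exactly where the hypothesis $\sup_n m_n<\infty$ of a bounded Vilenkin group enters, since it bounds the ratio $|f_{n,n}|/|f_{n-1,n-1}|$ at a jump by $\sup_n m_n$ (regularity of the filtration), keeping the overshoot past $2^k$ of order $2^k$. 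The remaining bookkeeping, namely the representation of the level sets as unions of basic squares, the almost everywhere convergence of the telescoping series, and the reduction of the martingale atoms $a_k$ to the atoms of the stated definition, is routine once Proposition \ref{lemma3.2.3dim2} is invoked.
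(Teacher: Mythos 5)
First, note that the paper itself does not prove this proposition: it is quoted from Weisz \cite{Webook1}, so the only thing to compare against is the standard proof in that reference. Your sufficiency half is correct and is exactly the standard argument: $S_{M_n,M_n}h=E_nh$ by \eqref{1dn}, the bound $\left\Vert a^{\ast}\right\Vert_p\le 1$ for every $p$-atom (cancellation for $n\le N$, the trivial $L_\infty$ bound for $n>N$), and $p$-subadditivity.

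The necessity half has a genuine gap, located exactly where you flag it, and the remedy you propose does not repair it. With $\nu_k:=\inf\{n:|f_{n,n}|>2^k\}$ the bound $\left\Vert f^{\nu_{k+1}}-f^{\nu_k}\right\Vert_\infty\le c\,2^k$ is false, because regularity does \emph{not} bound the ratio $|f_{n,n}|/|f_{n-1,n-1}|$: regularity is a property of the $\sigma$-algebras and gives $|f_{n,n}|\le R\,E_{n-1}|f_{n,n}|$ with $R=\sup_j m_j^2$, which is a much weaker statement. Concretely, let $f_{m,m}=0$ for $m\le N$, let $f_{N+1,N+1}$ take the values $\pm 1$ on the children of a fixed $N$-square $A$, and on one child where the value is $1$ let $f_{N+2,N+2}$ take the values $\Lambda,\,2-\Lambda,\,1,\dots,1$ on its children (mean $1$, so this is a martingale), with $\Lambda$ arbitrarily large; the filtration is still regular. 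For $2^k<1\le 2^{k+1}<\Lambda-2$ one gets $\nu_k=N+1$ on all of $A=\{f^{\ast}>2^k\}$ while $\nu_{k+1}=N+2$ where $|f_{N+2,N+2}|>2^{k+1}$, so $f^{\nu_{k+1}}-f^{\nu_k}$ contains the jump $d_{N+2}f$ of size about $\Lambda$ on a set where $\nu_k<\nu_{k+1}$; the resulting $a_k$ is supported on $A$ but violates $\left\Vert a_k\right\Vert_\infty\le c\,\mu(A)^{-1/p}$ by the unbounded factor $\Lambda$. The standard repair (and this is how Weisz actually uses regularity) is to stop \emph{before} the jump via a predictable majorant: $\lambda_n(x,y):=\max_{j\le n}\sup\{|f_{j+1,j+1}(u,v)|:(u,v)\in I_j(x)\times I_j(y)\}$ is $\digamma_{n,n}$-measurable, nondecreasing, majorizes $|f_{n+1,n+1}|$, and satisfies $\mu(\lambda_\infty>t)\le R\,\mu(f^{\ast}>t)$ because on a bounded Vilenkin group a square and its parent have comparable measure --- this is the actual entry point of $\sup_j m_j<\infty$. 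Taking $\nu_k:=\inf\{n:\lambda_n>2^k\}$ then gives $\left\Vert f^{\nu_k}\right\Vert_\infty\le 2^k$ cleanly, after which your bookkeeping (maximal squares, vanishing means, the layer-cake estimate) goes through.
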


Definition several variable Hardy spaces and real Hardy spaces and related theorems of atomic decompositions of these spaces can be found in Fefferman and Stein \cite{FS} (see also Later \cite{La}, Torchinsky \cite{Tor1}, Wilson \cite{Wil}). 
	
\section{Main results}

Our first main result reads:

\begin{theorem} \label{Th1}
a) Let $0<p<1$ and $ f\in {H_{p}^{\square}(G_m^{2})} .$ Then the maximal operator $ \widetilde{S}^{\ast ,p}$ defined by \eqref{wemax} is bounded from the martingale Hardy space ${H_{p}^{\square}(G_m^{2})} $ to the space $L_{p}(G_m^{2}).$
	
b) (Sharpness) Let $0<p<1$ and $\varphi :\mathbb{N}\rightarrow \lbrack 1,\infty )$ be a non-decreasing function, satisfying the condition
$${\sup_{m,n\in\mathbb{N}}}\frac{ {(m+n)}^{2/p-2}}{\varphi \left( m,n\right) }=+\infty .  $$
Then
\begin{equation*}
\sup_{m,n\in \mathbb{N}}\left\Vert \frac{S_{m,n}f}{\varphi \left( m,n\right)}\right\Vert _{weak-L_{p}(G_m^{2})}=\infty .
\end{equation*}
\end{theorem}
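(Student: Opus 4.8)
The plan is to prove part (a) via the standard atomic decomposition technique, and then part (b) by an explicit construction of a martingale that saturates the growth rate.

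\textbf{Part (a).} Using Proposition \ref{lemma1}, it suffices, by a well-known argument of Weisz, to verify that $\widetilde{S}^{\ast,p}$ maps $p$-atoms into $L_p$ with a uniform bound: one must show that for every $p$-atom $a$ supported on a cube $I_N\times I_N$ (by translation invariance we may assume the support is $I_N\times I_N$ with $\mu(I_N\times I_N)=M_N^{-2}$ and $\Vert a\Vert_\infty\leq M_N^{2/p}$), the quantity $\int_{G_m^2}|\widetilde{S}^{\ast,p}a|^p\,d\mu$ is bounded by an absolute constant. The subtlety with the sublinear $\widetilde{S}^{\ast,p}$ is that the atomic decomposition only converges to $f$ in the martingale sense, so one invokes the now-standard fact (again from Weisz's book, already cited) that for a $p$-quasi-linear operator it is enough to bound it on atoms together with the $\sigma$-subadditivity $\Vert\widetilde{S}^{\ast,p}f\Vert_p^p\leq\sum_k|\mu_k|^p\Vert\widetilde{S}^{\ast,p}a_k\Vert_p^p$. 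First I would split the integral over $(I_N\times I_N)$ and its complement; on the support the $L_\infty$ estimate of the partial sums together with the normalization gives the bound trivially, so the real work is on $\overline{I_N\times I_N}$. There, using \eqref{2} I decompose the complement into the dyadic annuli $(I_{s_1}\backslash I_{s_1+1})\times(I_{s_2}\backslash I_{s_2+1})$ together with the mixed strips $I_N\times(I_s\backslash I_{s+1})$ and $(I_s\backslash I_{s+1})\times I_N$, and on each piece I estimate $|S_{m,n}a(x,y)|$ by integrating the atom against the product Dirichlet kernel and applying the three bounds from Lemma \ref{dntwo2.1}. The factors $m^\varepsilon$ and $n^\varepsilon$ appearing there are precisely what the denominator $(m+n)^{2/p-2}$ is designed to absorb after choosing $\varepsilon$ suitably in terms of $p$; summing the resulting geometric-type series in $s$, $s_1$, $s_2$ over the annuli, weighted by the measures of the annuli, yields a finite bound independent of $N$ and of the atom.

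\textbf{Part (b).} Here the strategy is the familiar counterexample-via-test-martingale method. Given the function $\varphi$ with $\sup_{m,n}(m+n)^{2/p-2}/\varphi(m,n)=+\infty$, I would choose a rapidly increasing sequence of indices $n_k$ along which the supremum is realized, and build a martingale $f$ as a lacunary sum $f=\sum_k \lambda_k a_k$ of carefully normalized atoms $a_k$ (built from Dirichlet kernels $D_{M_{n_k}}$ and Vilenkin characters, exploiting \eqref{1dn} and \eqref{2dn}), with coefficients $\lambda_k$ chosen so that $\sum_k|\lambda_k|^p<\infty$, guaranteeing $f\in H_p^\square(G_m^2)$ via Proposition \ref{lemma1}. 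The point is then to pick well-chosen indices $(m,n)$ close to $(M_{n_k},M_{n_k})$, use the second identity in \eqref{2dn} and the lower bound \eqref{1atwo} on the number of admissible $n$, and estimate $S_{m,n}f$ from below on a set of controlled measure. Computing $\Vert S_{m,n}f/\varphi(m,n)\Vert_{weak-L_p}$ and letting $k\to\infty$, the unboundedness of $(m+n)^{2/p-2}/\varphi(m,n)$ forces the weak-$L_p$ quasi-norms to blow up.

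The main obstacle is in part (a): extracting the correct dependence on $\varepsilon$ so that the $m^\varepsilon,n^\varepsilon$ gains from Lemma \ref{dntwo2.1} exactly match the decay required to sum the annular contributions against the $(m+n)^{2/p-2}$ normalization, uniformly over all $(m,n)$ and all atom scales $N$. The delicate interplay is that $\varepsilon$ must be chosen large enough (close to $1$) to make the kernel integrals summable over the annuli, yet the resulting power of $(m+n)$ must still be dominated by $2/p-2$; verifying that such an $\varepsilon$ exists for every $0<p<1$, and handling the boundary case where $m$ or $n$ is comparable to $M_N$ versus much larger, is where the estimates require genuine care rather than routine bookkeeping.
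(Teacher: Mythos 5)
Your plan reproduces the paper's proof essentially step for step: part (a) is the same atomic reduction (using $L_\infty$-boundedness of the maximal operator to dispose of the support), the same decomposition of $\overline{I_N\times I_N}$ via \eqref{2} into the two mixed strips and the double annuli, and the same application of Lemma \ref{dntwo2.1} with $\varepsilon$ chosen so that $2/p-2-\varepsilon>0$ — and the ``delicate interplay'' you worry about does not materialize, since any $\varepsilon\in(0,1]$ below $2/p-2$ works (the annular sums $\sum_s M_s^{p-1}$ converge independently of $\varepsilon$, and the only input on the indices is $m+n+1>M_N$). For part (b) the paper uses the sequence of normalized test functions $f_k=\left(D_{M_{\alpha_k+1}}-D_{M_{\alpha_k}}\right)(x)\,D_{M_{\alpha_k}}(y)$ evaluated at the single index pair $(M_{\alpha_k}+1,1)$, where the partial sum collapses to a unimodular character, rather than assembling one lacunary martingale as you propose; your variant is a legitimate (and for the literal statement slightly more direct) repackaging of the same extremal example.
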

	
\begin{proof}
Since $\overset{\sim }{S}_{p}^{\ast }$ is bounded from $L_{\infty }$ to $L_{\infty }$ by using Proposition  \ref{lemma1} we
conclude that the proof of part a) will be complete, if we show that
\begin{equation} \label{main}
\int\limits_{\overline{{I}\times{I}}}\left\vert \overset{\sim }{S}_{p}^{\ast
}a\left( x,y\right) \right\vert ^{p}d\mu \left( x\right)d\mu \left( y\right) \leq c<\infty,\text{ \ \ as \ \ }0<p< 1,
\end{equation}%
for every $p$-atom $a,$ where $I\times I$ denotes the support of the atom$.$
	
Let $a$ be an arbitrary $p$-atom with support $I\times I$ and $\mu \left( I\times I\right)
=M_N^{-2}.$ We may assume that $I\times I=I_{N}\times I_{N},$ where $ I_N:=I_N(0).$ It is easy to see that $S_{m,n}a=0$ when $m\leq M_N$ and $n\leq M_N.$ Therefore we can suppose that either $m>M_{N}$ or $n>M_{N}$. Since $\left\Vert a\right\Vert_{\infty}\leq M_N^{2/p}$ we find that
\begin{eqnarray} \label{two7}
\left\vert S_{m,n} a\right\vert &\leq& \int_{I_{N}\times I_{N}}\left\vert
a\left( t_1,t_2\right) \right\vert \left\vert D_{m,n}\left(x+t_1,y+t_2\right) \right\vert
d\mu \left( t_1\right)d\mu \left( t_2\right)  \\ \notag
&\leq &\left\Vert a\right\Vert _{\infty }\int_{I_{N}\times I_{N}}\left\vert D_{m,n}\left(x+t_1,y+t_2\right) \right\vert d\mu \left( t_1\right)d\mu \left( t_2\right) \\ \notag
&\leq& M_N^{2/p}\int_{I_{N}\times I_{N}}\left\vert D_{m,n}\left( x+t_1,y+t_2\right) \right\vert d\mu
\left( t_1\right)d\mu
\left( t_2\right).  \notag
\end{eqnarray}
	
Let $0<p<1$ and $(x,y)\in I_{N}\times (I_{s_2}\backslash I_{s_2+1}).$ We choose $ \varepsilon, $ so that $ 2/p-2-\varepsilon>0$  and then from Lemma \ref{dntwo2.1} and \eqref{two7} it follows that
\begin{equation}\label{13AA2}
\frac{\left\vert S_{m,n}a\left(x,y\right)\right\vert}{\left( m+n+1\right)^{2/p-2}}\leq\frac{M_N^{2/p}M_{s_2}m^{\varepsilon}} {\left(m+n+1\right)^{2/p-2}M_N^{\varepsilon+1}}\leq\frac{M_N^{2/p-1-\varepsilon}M_{s_2}} {\left(m+n+1\right)^{2/p-2-\varepsilon} }\leq M_{s_2}M_N.
\end{equation}
	
According to \eqref{2} and \eqref{13AA2} we have that
\begin{eqnarray} \label{two9}
&&\int_{{I_{N}}\times \overline{I_{N}}}\left\vert \widetilde{S}_{p}^{\ast }a \right\vert ^{p}d\mu \times d\mu 
=\overset{N-1}{\underset{s_2=0}{\sum }}\int_{I_N\times (I_{s_2}\backslash I_{s_2+1})}\left\vert \widetilde{S}_{p}^{\ast}a\right\vert ^{p}d\mu \times d\mu \leq \overset{N-1}{\underset{s_2=0}{\sum }}\frac{M^p_{s_2}}{M_{s_2}}<c_{p}<\infty . 
\end{eqnarray}
If we apply \eqref{2}, \eqref{two7} and Lemma \ref{dntwo2.1} analogously to \eqref{two9} we obtain that 
\begin{eqnarray} \label{two10}
&&\int_{\overline{{I_{N}}}\times I_{N}}\left\vert \widetilde{S} _{p}^{\ast }a \right\vert ^{p}d\mu\times d\mu=\overset{N-1} {\underset{s_1=0}{\sum }}\int_{(I_{s_1}\backslash I_{s_1+1})\times I_N}\left\vert \widetilde{S}_{p}^{\ast}a\right\vert^{p}d\mu\times d\mu \leq\overset{N-1}{\underset{s_1=0}{\sum }}\frac{M^p_{s_1}}{M_{s_1}}<c_p<\infty . 
\end{eqnarray}
	
Let $0<p<1$ and $(x,y)\in (I_{s_1}\backslash I_{s_1+1})\times (I_{s_2}\backslash I_{s_2+1}).$ By using Lemma \ref{dntwo2.1} we get that
\begin{equation}\label{13AAtwo}
\frac{\left\vert S_{m,n}a\left( x,y\right) \right\vert }{ \left( m+n+1\right)^{2/p-2}}\leq \frac{M_N^{2(1/p-1)}M_{s_1}M_{s_2}
}{\left(m+n+1\right)^{2/p-2}}\leq M_{s_1}M_{s_2}.
\end{equation}
Hence,
\begin{eqnarray} \label{two11}
&&\int_{\overline{I_N}\times \overline{I_{N}}}\left\vert \widetilde{S} _{p}^{\ast }a \right\vert ^{p}d\mu\times d\mu=\overset{N-1} {\underset{s_1=0}{\sum }}\overset{N-1} {\underset{s_2=0}{\sum }}\int_{(I_{s_1}\backslash I_{s_1+1})\times (I_{s_2}\backslash I_{s_2+1})}\left\vert \widetilde{S}_{p}^{\ast}a\right\vert^{p}d\mu\times d\mu \\ \notag &\leq&\overset{N-1}{\underset{s_1=0}{\sum }}\frac{M^p_{s_1}}{M_{s_1}}\overset{N-1}{\underset{s_2=0}{\sum }}\frac{M^p_{s_2}}{M_{s_2}}<c_p<\infty. 
\end{eqnarray}
	
Since
\begin{equation*}
\overline{{I}_{N}\times{I}_{N}}=({{I}_{N}\times\overline{{I}_{N}}})\bigcup({\overline{{I}_{N}}\times{I}_{N}})\bigcup({\overline{{I}_{N}}\times\overline{{I}_{N}}}),
\end{equation*}
by combining (\ref{two9}), (\ref{two10}) and (\ref{two11}) we get that (\ref{main}) holds for every $p$-atom and the proof of part a) is complete.
		
Now, we prove the second part of the theorem. Let $\varphi :\mathbb{N}^2\rightarrow\lbrack 1,$ $\infty)$ be a non-decreasing function and $\{\alpha_k:k\in \mathbb{N}\}$ be a sequence of natural numbers satisfying the condition
	
\begin{equation*}
\lim_{k\rightarrow\infty}\frac{\left(M_{\alpha_k}+1\right) ^{2/p-2}}{\varphi\left(2^{\alpha_k}+1,1\right)}=+\infty.
\end{equation*}
For  $ k\in \mathbb{N}_+$ set
\begin{equation*}
f_{k}\left(x,y\right)=\left(D_{M_{\alpha_k+1}}\left(x\right)-D_{M_{\alpha_k}}\left( x\right)\right)
D_{M_{\alpha_k}}\left(y\right) .
\end{equation*}
It is evident that
\begin{equation*}
\widehat{f}_k\left(i,j\right) =
\begin{cases}
1,&\text{ if }(i,j)\in \{ \underset{l=0}{\overset{k}{\mathop{\cup }}}\{M_{\alpha _l},...,M_{\alpha_l+1}-1\}\}\times \{0,...,M_{\alpha_k+1}-1\}, \\
0,&\text{ otherwise}. 
\end{cases}
\end{equation*} 
Therefore, 
\begin{equation} \label{two14a}
S_{i,j}(f_k; x,y)=
\end{equation}
\begin{equation*}
\begin{cases}
\left(D_i\left(x\right)-D_{M_{\alpha_k}}\left(x\right)
\right) D_j\left( y\right),&\text{ if }(i,j)\in \{ \underset{l=0}{\overset{k}{\mathop{\cup }}}\{M_{\alpha _l},...,M_{\alpha_l+1}-1\}\}\times \{1,...,M_{\alpha_k+1}-1\},\\
f_k\left( x,y\right) &\text{ if }i\geq M_{\alpha_k+1} {\ \ \text {and} \ \ } j\geq M_{\alpha_k+1}, \\
0,&\text{ otherwise}. 
\end{cases}
\end{equation*}
From \eqref{two14a} it follows that 
\begin{eqnarray} \label{main1}
\left\Vert f_{{k}}\right\Vert _{H_{p}^{\square }}
&=&\left\Vert \sup\limits_{n\in
\mathbb{N}}S_{M_n,M_n} f_{k} \right\Vert_{p} =\left\Vert
\left(D_{M_{\alpha_k+1}}(x)-D_{M_{\alpha _{k}}}(x)\right)D_{M_{\alpha_k}}(y)\right\Vert_p 
\leq c_p M_{\alpha_k}^{2\alpha_{k}\left(1-1/p\right)}.\\ \notag
	\end{eqnarray}
	Let $(x,y)\in G_m^2$. Moreover, \eqref{two14a} also implies that 
\begin{eqnarray*}
\frac{\left\vert S_{M_{\alpha _{k}}+1,1}(f_{{k}};x,y) \right\vert }{\varphi \left( M_{\alpha _{k}}+1,1\right) } 
=\frac{\left\vert \left( D_{M_{\alpha_{k}}+1}\left( x\right)
-D_{M_{\alpha _{k}}}\left( x\right) \right) D_{1}\left( y\right) 
\right\vert}{\varphi \left(M_{\alpha _{k}}+1,1\right)} =\frac{\left\vert w_{M_{\alpha _{k}}}\left( x\right) w_{0}\left( y\right) \right\vert }{\varphi \left( M_{\alpha
_{k}}+1,1\right)}=\frac{1}{\varphi \left(M_{\alpha _{k}}+1,1\right)}.
	\end{eqnarray*}
	
Hence, by also using \eqref{main1}, we find that
\[\begin{split}
&\frac{\frac{1}{\varphi \left( M_{\alpha_k}+1,1\right)}\left( \mu \left\{ (x,y)\in G^2_m:\frac{
S_{M_{\alpha _{k}}+1,1}(f_k;x,y)  }{\varphi
\left( M_{\alpha _{k}}+1,1\right) }\geq \frac{1}{\varphi \left( M_{\alpha_{k}}+1,1\right) }\right\} \right)^{1/p}}
{\left\Vert f_{{k}}\right\Vert _{H_p^{\square}(G_m^2)}} \\
&\geq \frac{1}{\varphi \left( M_{\alpha _{k}}+1,1\right)M_{\alpha_k}^{2\left( 1-1/p\right) }}\geq \frac{\left( M_{\alpha
_{k}}+1\right)^{2/p-2}}{\varphi \left( M_{\alpha _{k}}+1,1\right) }
\rightarrow \infty,  \ \ \ \text{ as } \ \ \ k\rightarrow \infty .
\end{split}\]

The proof is complete.
\end{proof}

We also apply Theorem \ref{Th1} to obtain that the following is true:

\begin{theorem} \label{Th2}
	Let $0<p<1,$ $f\in H_{p}^{\square}(G_m^2),$  $2^{-\alpha}<m/n\leq 2^{\alpha}$ and $ 2^{k}<m,n\leq 2^{k+1+[\alpha]}. $ Then there exists an absolute constant $ c_p, $ such that
	\begin{equation*}
	\left\Vert S_{m,n}f-f\right\Vert_{{H_{p}^{\square}(G_m^{2})} }\leq c_{p}{M}_{k}^{2/p-2}\omega_{H_{p}^{\square}(G_m^{2})} \left( \frac{1}{M_{k}},f\right).  
	\end{equation*}
\end{theorem}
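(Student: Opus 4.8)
The plan is to reduce the estimate to the boundedness of the maximal operator $\widetilde{S}^{\ast,p}$ from Theorem~\ref{Th1}a). Set $g:=f-S_{M_k,M_k}f$, and recall that $\omega_{H_p^{\square}(G_m^2)}(1/M_k,f)=\|f-S_{M_k,M_k}f\|_{H_p^{\square}}=\|g\|_{H_p^{\square}}$, where $S_{M_k,M_k}$ is the two-dimensional conditional expectation $E_{k,k}$. The hypotheses on $m,n$ (the band $2^k<m,n\le 2^{k+1+[\alpha]}$ with $2^{-\alpha}<m/n\le 2^{\alpha}$, together with the boundedness of the Vilenkin group) guarantee the two facts the argument needs: $M_k\le m,n$ and $m+n\le cM_k$. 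Since rectangular partial sums satisfy $S_{m,n}S_{M_k,M_k}=S_{\min(m,M_k),\min(n,M_k)}$, the inequality $m,n\ge M_k$ gives $S_{m,n}S_{M_k,M_k}f=S_{M_k,M_k}f$, and hence the telescoping identity
\[
S_{m,n}f-f=S_{m,n}g-g .
\]

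Next I would use the $p$-subadditivity of the quasi-norm on $H_p^{\square}$ for $0<p<1$, namely $\|a+b\|_{H_p^{\square}}^{p}\le\|a\|_{H_p^{\square}}^{p}+\|b\|_{H_p^{\square}}^{p}$, to obtain $\|S_{m,n}f-f\|_{H_p^{\square}}^{p}\le\|S_{m,n}g\|_{H_p^{\square}}^{p}+\|g\|_{H_p^{\square}}^{p}$. The term $\|g\|_{H_p^{\square}}=\omega_{H_p^{\square}}(1/M_k,f)$ is already of the desired form, since $M_k^{2/p-2}\ge 1$ when $p<1$. Thus the whole matter reduces to establishing
\[
\|S_{m,n}g\|_{H_p^{\square}}\le c\,(m+n)^{2/p-2}\,\|g\|_{H_p^{\square}} .
\]

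This last inequality is the key step, and the place where Theorem~\ref{Th1} enters: it lifts the $L_p$ bound for the maximal operator to an $H_p^{\square}$ bound for the single partial sum $S_{m,n}g$. Using Proposition~\ref{lemma3.2.3dim2} I would write $\|S_{m,n}g\|_{H_p^{\square}}=\big\|\sup_{j}|S_{M_j,M_j}S_{m,n}g|\big\|_{p}$ and then exploit $S_{M_j,M_j}S_{m,n}g=S_{\min(M_j,m),\min(M_j,n)}g$. Putting $m_j:=\min(M_j,m)\le m$ and $n_j:=\min(M_j,n)\le n$, so that $m_j+n_j\le m+n$, the definition \eqref{wemax} of $\widetilde{S}^{\ast,p}$ together with $2/p-2>0$ yields, uniformly in $j$,
\[
|S_{m_j,n_j}g|\le (m_j+n_j)^{2/p-2}\,\widetilde{S}^{\ast,p}g\le (m+n)^{2/p-2}\,\widetilde{S}^{\ast,p}g .
\]
Taking the supremum over $j$ and then the $L_p$ norm, Theorem~\ref{Th1}a) gives $\|S_{m,n}g\|_{H_p^{\square}}\le c\,(m+n)^{2/p-2}\|\widetilde{S}^{\ast,p}g\|_{p}\le c\,(m+n)^{2/p-2}\|g\|_{H_p^{\square}}$, as claimed.

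Finally, combining the three displays and using $m+n\le cM_k$ (hence $(m+n)^{2/p-2}\le c_pM_k^{2/p-2}$) gives $\|S_{m,n}f-f\|_{H_p^{\square}}\le c_pM_k^{2/p-2}\|g\|_{H_p^{\square}}=c_pM_k^{2/p-2}\,\omega_{H_p^{\square}}(1/M_k,f)$. I expect the only genuinely delicate point to be this passage from the $L_p$ maximal estimate to the $H_p^{\square}$ estimate for $S_{m,n}g$; the device that makes it work is the observation that $S_{M_j,M_j}S_{m,n}g$ is again a rectangular partial sum whose indices do not exceed $(m,n)$, so it is dominated pointwise by $\widetilde{S}^{\ast,p}g$ with the uniform factor $(m+n)^{2/p-2}$.
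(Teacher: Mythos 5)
Your proposal is correct and follows essentially the same route as the paper: both arguments rest on the identity $S_{m,n}S_{M_k,M_k}f=S_{M_k,M_k}f$ (so that $S_{m,n}f-f=S_{m,n}g-g$ with $g=f-S_{M_k,M_k}f$), on Proposition~\ref{lemma3.2.3dim2} to express $\Vert S_{m,n}g\Vert_{H_p^{\square}}$ through the dyadic partial sums $S_{M_j,M_j}S_{m,n}g$, and on Theorem~\ref{Th1}a) to control these. The only (harmless) difference is bookkeeping: you dominate the whole supremum $\sup_j\vert S_{M_j,M_j}S_{m,n}g\vert$ pointwise by $(m+n)^{2/p-2}\widetilde{S}^{\ast,p}g$ in one stroke, whereas the paper splits the martingale $\left(S_{2^j,2^j}S_{m,n}f\right)_j$ into an initial segment handled by $\widetilde{S}_{\#}^{\ast}$ plus at most $\alpha+2$ intermediate partial sums $S_{M_i,n}f$ estimated individually.
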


\begin{proof} Without lost a generality we may assume that $n<m.$ According to Theorem \ref{Th1} we can conclude that
\begin{equation*}
\left\Vert S_{m,n}f\right\Vert _{p}\leq c_{p}^1{(m+n)}^{2/p-2}\left\Vert f\right\Vert _{H_p^{\square}(G_m^2)}\leq c_{p}^2 M_{k+1+[\alpha]}^{2/p-2}\left\Vert f\right\Vert_{H_p^{\square}(G_m^2)}\leq c_p^3 M_k^{2/p-2}\left\Vert f\right\Vert_{H_p^{\square}(G_m^{2})}.
\end{equation*}

Since $ 2^{-\alpha }\leq m/n\leq 2^{\alpha }$ we obtain that 
$$M_{k}<m,n,M_{|n|},M_{|n|+1}...,M_{|m|}\leq M_{k+1+[\alpha]}, \ \ {(|m|-|n|+1)}\leq\alpha+2$$
and
\begin{equation*}
\left\Vert S_{M_{i},n}f\right\Vert _{p}\leq c_p^3 M_k^{2/p-2}\left\Vert f\right\Vert_{H_p^{\square}(G_m^{2})},\ \ \text{where } \ \ |n|\leq i \leq |m|.
\end{equation*} 

Let us consider the following martingale $f_{\#}:=\left(S_{2^k,2^k}S_{m,n}f,\text{ }k\in\mathbb{N}_+\right).$ By a simple calculation we get that
\begin{equation*}
f_{\#}
=\left(S_{M_0,M_0}f,S_{M_{|n|},M_{|n|}}f,S_{M_{|n|+1},n}f,...,
S_{M_{|m|},n}f, S_{m,n}f,...,S_{m,n}f,...\right).
\end{equation*}

By using Proposition \ref{lemma3.2.3dim2} we immediately get that
\begin{eqnarray*}
\left\Vert S_{m,n}f\right\Vert _{H_p^{\square}(G_m^2)}^p
&\leq & \left\Vert \sup_{0\leq l\leq |n|}\left\vert S_{M_l,M_l}f\right\vert \right\Vert_p^p+\overset{|m|}{\underset{i=|n|}{\sum }}\left\Vert
S_{M_i,n}f\right\Vert_p^p
+\left\Vert S_{m,n}f\right\Vert_p^p\\
&\leq &\left\Vert \widetilde{S}_{\#}^{\ast } f\right\Vert_p^p+
{(|m|-|n|+1)}M_k^{2/p-2}
\left\Vert f\right\Vert_{H_p^{\square}(G_m^2)}^p \\
&\leq &c_p^5\left\Vert f\right\Vert_{H_p^{\square}(G_m^2)}^p+
{c_p^6}M_k^{2/p-2}
\left\Vert f\right\Vert_{H_p^{\square}(G_m^2)}^p\leq {c_p}M_k^{2/p-2}
\left\Vert f\right\Vert_{H_p^{\square}(G_m^2)}^p.
\end{eqnarray*}

Hence,
\begin{eqnarray*}
\left\Vert S_{m,n}f-f\right\Vert _{H_p^{\square}(G_m^2)}^p &\leq&  \left\Vert
S_{m,n}f-S_{M_k,M_k}f\right\Vert _{H_p^{\square}(G_m^2)}^p+\left\Vert S_{M_{k},M_{k}}f-f\right\Vert_{H_p^{\square}(G_m^2)}^p \\
&=&\left\Vert S_{m,n}\left( S_{M_{k},M_{k}}f-f\right) \right\Vert _{H_p^{\square}(G_m^2)}^p+\left\Vert S_{M_k,M_k}f-f\right\Vert _{H_p^{\square}(G_m^2)}^p \\
&\leq & c_{p}^2\left(M_k^{2-2p}+1\right)\omega_ {H_p^{\square}(G_m^2)}^p\left( \frac{1}{M_{k}},f\right)
\end{eqnarray*}%
and
\begin{equation*}
\left\Vert S_{m,n}f-f\right\Vert _{H_p^{\square}(G_m^2)}^p\leq c_{p}{M}_k^{2/p-2}\omega_{H_{p}^{\square}(G_m^{2})} \left( \frac{1}{M_{k}},f\right).  \label{app}
\end{equation*}
	
The proof is complete.
\end{proof}

\begin{theorem}
a) Let $0<p<1,$ $f\in H_{p}^{\square}(G_m^{2}),$ $2^{-\alpha }\leq m/n\leq 2^{\alpha }$ and
\begin{equation*} \label{10A}
\omega_{H_{p}^{\square}(G_m^{2})} \left( \frac{1}{M_{k}},f\right)=o\left( \frac{1}{M_{k}^{2/p-2}}\right),\text{ as \ }k\rightarrow \infty .  
\end{equation*}
Then
\begin{equation*}
\left\Vert S_{m,n}f-f\right\Vert_{H_{p}^{\square}(G_m^{2})}\rightarrow
0,\text{ as }m,n\rightarrow \infty .
\end{equation*}
	
b) (Sharpness) Let $0<p<1$ and $2^{-\alpha}<m/n\leq 2^{\alpha}.$ Then there exists a martingale $f\in H_{p}^{\square}(G_m^{2}),$ such that 
\begin{equation*}
\omega_{H_{p}^{\square}(G_m^{2})} \left( \frac{1}{M_{k}},f\right)=O\left( \frac{1}{M_k^{2/p-2}}\right),\text{ as \ }k\rightarrow \infty
\end{equation*}
and
\begin{equation*}
\left\Vert S_{m,n}f-f\right\Vert _{weak-L_p(G_m^{2})}\nrightarrow
0\text{ as  }m,n\rightarrow \infty .
\end{equation*}
\end{theorem}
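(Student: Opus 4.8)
This follows immediately from Theorem \ref{Th2}. Given $m,n\to\infty$ with $2^{-\alpha}\le m/n\le 2^{\alpha}$, for each such pair I choose the integer $k$ with $2^{k}<m,n\le 2^{k+1+[\alpha]}$; comparability of $m$ and $n$ guarantees that a common $k$ exists, and $m,n\to\infty$ forces $k\to\infty$. Theorem \ref{Th2} then gives
\[
\left\Vert S_{m,n}f-f\right\Vert_{H_p^{\square}(G_m^2)}\le c_p M_k^{2/p-2}\,\omega_{H_p^{\square}(G_m^2)}\left(\frac{1}{M_k},f\right).
\]
Inserting the hypothesis $\omega_{H_p^{\square}(G_m^2)}(1/M_k,f)=o(M_k^{-(2/p-2)})$, the right-hand side is $M_k^{2/p-2}\cdot o(M_k^{-(2/p-2)})=o(1)\to 0$, so no work is needed beyond Theorem \ref{Th2}.

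\textbf{Part b).} The plan is to build an explicit lacunary martingale saturating the borderline rate. Fix integers $\alpha_1<\alpha_2<\cdots$ growing fast enough that $\sum_k M_{\alpha_k}^{-(2-2p)}<\infty$ (since $2-2p>0$ and $M_{\alpha_k}\ge 2^{\alpha_k}$, even $\alpha_k=k$ works). For $k\ge1$ set
\[
a_k(x,y):=M_{\alpha_k}^{2/p-2}\bigl(D_{M_{\alpha_k+1}}(x)-D_{M_{\alpha_k}}(x)\bigr)\bigl(D_{M_{\alpha_k+1}}(y)-D_{M_{\alpha_k}}(y)\bigr),\qquad \lambda_k:=M_{\alpha_k}^{-(2/p-2)},
\]
and $f:=\sum_{k=1}^{\infty}\lambda_k a_k$. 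Using \eqref{1dn}, $D_{M_{\alpha_k+1}}-D_{M_{\alpha_k}}$ is supported on $I_{\alpha_k}$, has vanishing integral, and $L_\infty$-norm $\le cM_{\alpha_k}$; hence, up to an absolute constant factor, each $a_k$ is a $p$-atom with support $I_{\alpha_k}\times I_{\alpha_k}$. Since $\sum_k|\lambda_k|^p=\sum_k M_{\alpha_k}^{-(2-2p)}<\infty$, Proposition \ref{lemma1} yields $f\in H_p^{\square}(G_m^2)$. For the modulus of continuity I would use that $a_k$ has two-dimensional Vilenkin--Fourier spectrum in the band $\{M_{\alpha_k},\dots,M_{\alpha_k+1}-1\}^2$, so $S_{M_n,M_n}a_k=a_k$ for $n\ge\alpha_k+1$ and $S_{M_n,M_n}a_k=0$ for $n\le\alpha_k$; thus $f-S_{M_n,M_n}f=\sum_{\alpha_k\ge n}\lambda_k a_k$, and Proposition \ref{lemma1} together with lacunarity gives $\omega_{H_p^{\square}(G_m^2)}^p(1/M_n,f)\le\sum_{\alpha_k\ge n}M_{\alpha_k}^{-(2-2p)}\le cM_n^{-(2-2p)}$, i.e. exactly $O(M_n^{-(2/p-2)})$.

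The divergence is the heart of the argument. I would test the partial sums at $m_k=n_k=M_{\alpha_k}+1$ (ratio $1\in(2^{-\alpha},2^{\alpha}]$) and exploit the identity $D_{M+1}-D_M=\psi_M$. This gives $\lambda_k a_k=\bigl(D_{M_{\alpha_k+1}}-D_{M_{\alpha_k}}\bigr)(x)\bigl(D_{M_{\alpha_k+1}}-D_{M_{\alpha_k}}\bigr)(y)$ and
\[
S_{M_{\alpha_k}+1,\,M_{\alpha_k}+1}(\lambda_k a_k)=\psi_{M_{\alpha_k}}(x)\,\psi_{M_{\alpha_k}}(y),
\]
a function of modulus $1$ everywhere. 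Splitting $f$ into the blocks $j<k$ (reproduced unchanged, since their spectra lie below $M_{\alpha_k}$), $j=k$, and $j>k$ (annihilated, since their spectra lie above $M_{\alpha_k}$), one obtains
\[
S_{M_{\alpha_k}+1,\,M_{\alpha_k}+1}f-f=\psi_{M_{\alpha_k}}(x)\psi_{M_{\alpha_k}}(y)-\lambda_k a_k-\sum_{j>k}\lambda_j a_j.
\]
Because $a_k$ and all $a_j$ with $j>k$ are supported inside $I_{\alpha_k}\times I_{\alpha_k}$, on the complement $G_m^2\setminus(I_{\alpha_k}\times I_{\alpha_k})$ the difference coincides with $\psi_{M_{\alpha_k}}(x)\psi_{M_{\alpha_k}}(y)$ and hence has modulus $1$. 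As $\mu\bigl(G_m^2\setminus(I_{\alpha_k}\times I_{\alpha_k})\bigr)=1-M_{\alpha_k}^{-2}\ge 1/2$, this forces
\[
\left\Vert S_{M_{\alpha_k}+1,M_{\alpha_k}+1}f-f\right\Vert_{weak-L_p(G_m^2)}\ge \bigl(1-M_{\alpha_k}^{-2}\bigr)^{1/p}\to 1,
\]
so the partial sums do not converge to $f$ in $weak$-$L_p$.

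The only genuinely delicate point is the calibration of the coefficients: the atoms must be normalized so that $f\in H_p^{\square}(G_m^2)$ and, simultaneously, the modulus of continuity sits exactly on the critical rate $M_n^{-(2/p-2)}$ rather than strictly below it. Once $\lambda_k=M_{\alpha_k}^{-(2/p-2)}$ is fixed, the divergence is a clean consequence of $D_{M+1}-D_M=\psi_M$ and the unimodularity of the characters $\psi_{M_{\alpha_k}}$, while the remaining estimates are routine bookkeeping with the atomic decomposition of Proposition \ref{lemma1}.
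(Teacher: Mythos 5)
Your argument is correct and follows essentially the same route as the paper: part a) is the identical reduction to Theorem \ref{Th2}, and part b) uses the very same atoms $a_k$, coefficients $\lambda_k=M_{\alpha_k}^{-(2/p-2)}$, and test indices $M_{\alpha_k}+1$, with the identity $D_{M+1}-D_M=\psi_M$ producing the unimodular term. The only (harmless) deviation is in the final lower bound: the paper applies the $p$-power quasi-triangle inequality together with $\Vert f-S_{M_{\alpha_k},M_{\alpha_k}}f\Vert_{weak-L_p}\rightarrow 0$, whereas you localize to $G_m^2\setminus(I_{\alpha_k}\times I_{\alpha_k})$, where the tail atoms vanish and the difference is exactly a character of modulus one — both are valid.
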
	
		
\begin{proof} Let $0<p<1,$ $f\in {H_{p}^{\square}(G_m^{2})},$ $ 2^{-\alpha }\leq m/n\leq 2^{\alpha }$  and
\begin{equation*}
\omega_{H_{p}^{\square }(G_m^{2})} \left( \frac{1}{M_{k}},f\right)=o\left( \frac{1}{
M_k^{2/p-2}}\right) ,\text{ \ \ \ as \ \ \ }k\rightarrow \infty .
\end{equation*}%
By using Theorem \ref{Th2} we immediately get that
\begin{equation*}
\left\Vert S_{m,n}f-f\right\Vert_{H_p^{\square}(G_m^2)}\rightarrow \infty ,\ \ \ \text{ as }\ \ \
n\rightarrow \infty 
\end{equation*}
	and the proof of part a) is complete.
	
	Let \qquad 
	\begin{equation*}
	f_{n,n}=\sum_{\left\{ k;\text{ }\alpha _{k}+1<n\right\} }\lambda _{k}a_{k},
	\end{equation*}%
	where 
	\begin{equation*}
	\lambda _{k}=M_{\alpha _{k}}^{-\left( 2/p-2\right)}
	\end{equation*}%
	and
	
\begin{equation*}
a_{k}\left( x,y\right)=M_{\alpha _{k}}^{2/p-2}\left(
D_{M_{\alpha _{k}+1}}\left( x\right) -D_{M_{\alpha _{k}}}\left( x\right)
\right) \left(
D_{M_{\alpha_{k}+1}}\left( y\right)-D_{M_{\alpha_k}}\left( y\right)
\right).
\end{equation*}
	
Since
\begin{equation*}
S_{M_{n},M_{n}}a_{k}=\left\{ 
\begin{array}{l}
a_{k}\text{, \qquad }\alpha _{k}+1<n, \\ 
0\text{, \qquad\ \ }\alpha _{k}+1\geq n,
\end{array}\right. 
\end{equation*}

\begin{equation*}
\text{supp}(a_{k})=I_{\alpha _{k}}^{2},\text{ \qquad }\int_{I_{\alpha
_{k}}^{2}}a_{k}d\mu =0,\text{ \qquad }\left\Vert a_{k}\right\Vert _{\infty}\leq M_{\alpha _{k}}^{2/p}=(\text{supp }a_{k})^{-1/p}
\end{equation*}%
from Lemma \ref{lemma1} and the fact that
$\sum_{k=0}^{\infty }\left| \mu _{k}\right| ^{p}<\infty ,$
we conclude that $f\in H_{p}^{\square }(G_m^{2}).$
	
	Moreover, for all $k\in \mathbb{N}_{+},$
\begin{eqnarray}\label{20two20}
&&f-S_{M_{n},M_{n}}f \\ 
&=&\left( f^{\left( 1\right) }-S_{M_{n},M_{n}}f^{\left( 1\right) },...,f^{\left(n\right) }-S_{M_{n},M_{n}}f^{\left( n\right) },...,f^{\left( n+k\right)} -S_{M_{n},M_{n}}f^{\left( n+k\right) }\right) \notag \\
&=&\left( 0,...,0,f^{\left( n+1\right) }-f^{\left( n\right) },...,f^{\left(n+k\right) }-f^{\left( n\right) },...\right)=\left( 0,...,0,\underset{i=n}{\overset{n+k}{\sum }}\frac{a_{i}(x,y)}{
M_i^{2/p-2}},...\right)\notag
\end{eqnarray}
is a martingale and \eqref{20two20} is its atomic decomposition. By using Lemma \ref{lemma1} we find that
\begin{equation*}
\omega _{H_{p}^{\square }(G_m^{2})}\left( \frac{1}{M_n},f\right) :=\left\Vert f-S_{M_n,M_n}f\right\Vert_{H_{p}^{\square }(G_m^{2})}\leq \underset{i=n}{\overset{\infty}{\sum }}\frac{1}{
M_i^{2/p-2}}\leq \frac{c}{M_n^{2/p-2}}.
\end{equation*}
	
It is easy to show that 
\begin{eqnarray} \label{5}
&&\widehat{f}(i,j)   =\left\{ 
\begin{array}{l}
1,\text{ \ if \ }\left( i,j\right) \in \left\{
M_{\alpha _{k}},...,M_{\alpha _{k}+1}-1\right\}^2 ,\text{ }k\in \mathbb{N}, \\ 
0,\text{ \ if \ }\left( i,j\right) \notin \bigcup\limits_{k=1}^{\infty
}\left\{M_{\alpha_{k}},...,M_{\alpha _{k}+1}-1\right\}^2.
\end{array}%
\right.   \notag
\end{eqnarray}
	
Hence,
\begin{eqnarray} \label{13d}
S_{M_{\alpha _{k}+1},M_{\alpha _{k}+1}}f\left( x,y\right)= S_{M_{\alpha _{k}},M_{\alpha _{k}}}f\left( x,y\right)+w_{M_{\alpha _{k}}}\left( x\right)w_{M_{\alpha _{k}}}\left( y\right)=:I+II. 
\end{eqnarray}
	
It is obvious that
$\left\vert II\right\vert=\left\vert w_{M_{2\alpha_k}}\left( x\right)w_{M_{\alpha _{k}}}\left( y\right)\right\vert= 1$
and
\begin{eqnarray}  \label{13}
&&\left\Vert II\right\Vert _{weak-L_{p}(G_m^{2})}^p\geq \frac{1}{2^p}\left( \mu \left\{ \left( x,y\right)\in G_m^2
:\left\vert II \right\vert \geq \frac{1}{2}\right\} \right)\geq \frac{1}{2^p}\mu \left(G_m^2 \right) \geq \frac{1}{2^p}. 
\end{eqnarray}
	
Since (for details see e.g. Weisz \cite{Webook1} and \cite{Webook2})
\begin{eqnarray*}
\Vert f-S_{M_{n},M_{n}}f\Vert _{weak-L_{p}(G_m^{2})}\rightarrow 0,  \ \ \text{as} \ \ n\rightarrow \infty.
\end{eqnarray*}
	
According to (\ref{13d}) and (\ref{13}) we obtain that
\begin{eqnarray*} \label{144}
&&\limsup\limits_{k\rightarrow \infty }\Vert f-S_{M_{\alpha _{k}}+1,M_{\alpha _{k}}+1}f\Vert _{weak-L_{p}(G_m^{2})}^{p} \\ \notag
&\geq&\limsup\limits_{k\rightarrow \infty }\Vert II\Vert _{weak-L_{p}(G_m^{2})}^p-\limsup\limits_{k\rightarrow \infty }\Vert f-S_{M_{\alpha _{k}},M_{\alpha _{k}}}f\Vert _{weak-L_{p}(G_m^{2})}^p \geq c>0.
\end{eqnarray*}
	
The proof is complete.
\end{proof}

\begin{theorem}
a) Let $0<p<1, \ \ f\in H_p^{\square}\left( G^2_m\right).$ Then there exists an absolute constant $c_p, $ depending only of $p, $ such that 
\begin{equation*}
\underset{{ \{\left(k,l\right):\ 2^{-\alpha }\leq k/l\leq 2^{\alpha }\}}}{\sum }\frac{\left\Vert
S_{k,l}f\right\Vert _{p}^{p}}{\left( kl\right) ^{2-p}}\leq c_{p}\left\Vert f\right\Vert_{H_{p}^{\square}(G_m^{2})}^{p}.
\end{equation*}
	
b) Let $0<p<1$ and $\Phi :\mathbb{N}^2\rightarrow \lbrack 1, \infty )$ be non-decreasing, nonnegative function, satisfying the condition 
\begin{equation} \label{dir222}
\overline{\underset{m,n\rightarrow \infty }{\lim }}\Phi \left( m,n\right)=+\infty .  
\end{equation}
	
Then there exists a martingale $f\in {H_{p}^{\square}(G_m^{2})} $ such that		
\begin{equation*}
\underset{ \{\left(k,l\right):\ \ 2^{-\alpha }\leq k/l\leq 2^{\alpha }\}}{\sum}\frac{ \left\Vert S_{k,l}f\right\Vert_p^p\Phi \left( k,l\right)}{(kl)^{2-p}}=\infty.
\end{equation*}
\end{theorem}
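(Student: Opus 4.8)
The plan for part a) is to combine the atomic decomposition with the localized kernel estimates already established for Theorem \ref{Th1}. By Proposition \ref{lemma1} I would write $f=\sum_k\mu_k a_k$ with $\sum_k|\mu_k|^p\backsim\|f\|_{H_p^{\square}}^p$; since $0<p<1$ the $p$-th power of the quasi-norm is subadditive, so $\|S_{k,l}f\|_p^p\le\sum_j|\mu_j|^p\|S_{k,l}a_j\|_p^p$, and after interchanging the nonnegative sums it suffices to prove
\[
\sum_{\{(k,l):\,2^{-\alpha}\le k/l\le 2^{\alpha}\}}\frac{\|S_{k,l}a\|_p^p}{(kl)^{2-p}}\le c_p
\]
for an arbitrary $p$-atom $a$ with support $I_N\times I_N$. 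Since $S_{k,l}a=0$ unless $k>M_N$ or $l>M_N$, the ratio restriction forces $k,l>2^{-\alpha}M_N$, so the sum effectively runs over $k,l\gtrsim M_N$ only.

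Next I would split $G_m^2$ into the four pieces $I_N\times I_N$, $I_N\times\overline{I_N}$, $\overline{I_N}\times I_N$ and $\overline{I_N}\times\overline{I_N}$ and bound $\int_{\mathrm{piece}}|S_{k,l}a|^p$ using \eqref{two7}, Lemma \ref{dntwo2.1}, and the auxiliary bound $\int_{I_N}|D_m(x-t)|\,d\mu(t)\le c\,m^{\varepsilon}M_N^{-\varepsilon}$ from its proof (with $\varepsilon>0$ small, as in Theorem \ref{Th1}). On $(I_{s_1}\backslash I_{s_1+1})\times(I_{s_2}\backslash I_{s_2+1})$ this gives $|S_{k,l}a|\le c\,M_N^{2/p-2}M_{s_1}M_{s_2}$, so summing the geometric series in $s_1,s_2$ yields $\int_{\overline{I_N}\times\overline{I_N}}|S_{k,l}a|^p\le c\,M_N^{2-2p}$ uniformly in $(k,l)$, while the $\varepsilon$-gain makes the remaining three pieces lower order. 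It then remains to sum against $(kl)^{-(2-p)}$: for each $k$ there are $\asymp k$ admissible $l$ and $\sum_{k>cM_N}k^{2p-3}\asymp M_N^{2p-2}$, so the dominant piece contributes $\asymp M_N^{2-2p}M_N^{2p-2}\asymp 1$. The main obstacle here is exactly this $\overline{I_N}\times\overline{I_N}$ term: its kernel bound is independent of $(k,l)$, so the series converges only because the weight $(kl)^{2-p}$ together with the lattice-point count supplies the compensating factor $M_N^{2p-2}$; the cruder estimate $|S_{k,l}a|\le(k+l)^{2/p-2}\widetilde{S}^{\ast,p}a$ coming directly from Theorem \ref{Th1} is too lossy and produces a divergent sum.

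For part b) I would construct a lacunary martingale tuned to $\Phi$. Fix $\varepsilon>0$ with $1+\varepsilon\le 2^{\alpha}$, set $\lambda_j=j^{-2/p}$, and along a rapidly increasing sequence $n_j$ take
\[
a_j(x,y)=M_{n_j}^{2/p-2}\bigl(D_{M_{n_j+1}}-D_{M_{n_j}}\bigr)(x)\bigl(D_{M_{n_j+1}}-D_{M_{n_j}}\bigr)(y).
\]
On a bounded Vilenkin group each $a_j$ is a bounded multiple of a $p$-atom supported on $I_{n_j}^2$, and since $\sum_j|\lambda_j|^p=\sum_j j^{-2}<\infty$, Proposition \ref{lemma1} gives $f:=\sum_j\lambda_j a_j\in H_p^{\square}(G_m^2)$; its Fourier coefficients are supported on the disjoint diagonal blocks $\{M_{n_j},\dots,M_{n_j+1}-1\}^2$.

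Finally I would estimate $\|S_{k,l}f\|_p$ for the indices $k=M_{n_j}+\kappa$, $l=M_{n_j}+\lambda$ with $1\le\kappa,\lambda\le\varepsilon M_{n_j}$, which satisfy $2^{-\alpha}\le k/l\le 2^{\alpha}$. On $\{x_0\ne0\}\times\{y_0\ne0\}$ every lower block vanishes (being supported in $I_1$) and the higher blocks do not yet enter $S_{k,l}$, so there $S_{k,l}f=\lambda_j M_{n_j}^{2/p-2}\,r_{n_j}(x)D_{\kappa}(x)\,r_{n_j}(y)D_{\lambda}(y)$. Restricting further to $\kappa_0=\lambda_0=1$, formula \eqref{2dn} shows $|D_{\kappa}(x)|=1$ for $x_0\ne0$, whence $\|S_{k,l}f\|_p^p\ge c\,j^{-2}M_{n_j}^{2-2p}$; summing over the $\asymp M_{n_j}^2$ such pairs in block $j$ against $(kl)^{-(2-p)}\asymp M_{n_j}^{-(4-2p)}$ gives a block contribution $\gtrsim j^{-2}$ that is independent of the scale $M_{n_j}$. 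Since $\Phi$ is non-decreasing and $\Phi(k,l)\ge\Phi(M_{n_j},M_{n_j})$ throughout block $j$, the weighted block sum is $\gtrsim j^{-2}\Phi(M_{n_j},M_{n_j})$, and choosing $n_j$ so large that $\Phi(M_{n_j},M_{n_j})\ge j^2$ (possible because \eqref{dir222} together with monotonicity forces $\Phi\to\infty$) makes the total dominate $\sum_j 1=\infty$. The crux of part b) is this scale-invariance of the per-block contribution, which lets one construction defeat an arbitrarily slowly growing $\Phi$; isolating the $j$-th block on $\{x_0\ne0\}\times\{y_0\ne0\}$ is the device that removes interference from the other blocks.
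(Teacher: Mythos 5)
Your proposal is correct and follows essentially the same route as the paper: part a) reduces to a single $p$-atom via Proposition \ref{lemma1} and uses the same estimates of Lemma \ref{dntwo2.1} on the same regions $I_N\times\overline{I_N}$, $\overline{I_N}\times I_N$, $\overline{I_N}\times\overline{I_N}$, with the last term dominating and the weight $(kl)^{2-p}$ supplying the compensating factor $M_N^{2p-2}$ exactly as in the paper's computation. Part b) is the same lacunary block construction, isolating each block on $(G_m\setminus I_1)^2$ where the lower blocks vanish; the paper merely tunes it differently (coefficients $\Phi^{-1/4}(M_{\alpha_k},M_{\alpha_k})$ with $\sum_k\Phi^{-p/4}<\infty$ and blocks of width $[\alpha]+1$ digits, versus your $\lambda_j=j^{-2/p}$ with scales chosen so that $\Phi(M_{n_j},M_{n_j})\ge j^2$), which is an equivalent bookkeeping of the same idea.
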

	
\begin{proof} If we follow similar steps of the proof of Theorem \ref{Th1} then we obtain that either $m>M_{N}$ or $n>M_{N}$, but in this case under condition $ 2^{-\alpha }\leq m/n\leq 2^{\alpha } $ we can conclude that $M_{N-1-[\alpha]}<m,n\leq M_{N+1+[\alpha]}.$
If we apply Proposition \ref{lemma1} we only have to prove that 
\begin{eqnarray*}
&& \underset{2^{-\alpha }\leq k/l\leq 2^{\alpha }, \ \ 
k,l> M_{N-1-[\alpha]}}{\sum}\frac{\left\Vert S_{k,l}a\right\Vert_ {p}^{p}}{(kl)^{2-p}}= \underset{2^{-\alpha }\leq k/l\leq 2^{\alpha }, \ \ k,l>M_{N-1-[\alpha]}}{\sum}\left\Vert\frac{ S_{k,l}a}{(kl)^{2/p-1}}\right\Vert_p^p<c_p<\infty.
\end{eqnarray*}

Let $0<p<1$ and $(x,y)\in I_{N}\times (I_{s_2}\backslash I_{s_2+1}).$
 We choose $ \varepsilon, $ so that $ 2/p-1-\varepsilon >0.$ Since
\begin{equation}\label{13AA2aaaa}
\frac{\left\vert S_{k,l}a\left( x,y\right) \right\vert }{\left( kl\right)^{2/p-1}}
\leq \frac{M_N^{2/p}M_{s_2}k^{\varepsilon}} {\left(kl\right)^{2/p-1}M_N^{\varepsilon+1}}
\leq\frac{M_{s_2}M^{2/p-1-\varepsilon}_N}{k^{2/p-1-\varepsilon}l^{2/p-1}}
\end{equation}
according to \eqref{2} and \eqref{13AA2aaaa} we get that
\begin{eqnarray} \label{two9aaaa}
&&\int_{{I_{N}}\times \overline{I_{N}}}\left\vert \frac{ S_{k,l}a}{(kl)^{2/p-1}}\right\vert ^{p}d\mu \times d\mu \\ \notag
&=&\overset{N-1}{\underset{s_2=0}{\sum }}\int_{I_N\times (I_{s_2}\backslash I_{s_2+1})}\left\vert \frac{ S_{k,l}a}{(kl)^{2/p-1}}\right\vert ^{p}d\mu \times d\mu \leq\frac{M^{1-p-p\varepsilon}_N}{k^{2-p-p\varepsilon}l^{2-p}}
\overset{N-1}{\underset{s_2=0}{\sum}}\frac{M^p_{s_2}}{M_{s_2}}<\frac{c_p M^{1-p-p\varepsilon}_N}{k^{2-p-p\varepsilon}l^{2-p}}.  \notag
\end{eqnarray}

By using  \eqref{two9aaaa} we find that
\begin{eqnarray*}
&&\underset{2^{-\alpha }\leq k/l\leq 2^{\alpha }, \ \ 
k,l\geq M_{N-1-[\alpha]}}{\sum}\int_{{I_{N}}\times \overline{I_{N}}}\left\vert \frac{ S_{k,l}a}{(kl)^{2/p-1}}\right\vert ^{p}d\mu \times d\mu \\
&\leq&c_p M_N^{1-p-p\varepsilon}
\sum_{k=M_{N-1-[\alpha]}}^{\infty}\frac{1}{k^{2-p-p\varepsilon}}\sum_{l=M_{N-1-[\alpha]}}^{\infty}\frac{1}{l^{2-p}}\\
&\leq&c_p M_N^{1-p-p\varepsilon}\frac{1}{M_{N-1-[\alpha]}^{1-p-p\varepsilon}}\frac{1}{M_{N-1-[\alpha]}^{1-p}}\leq c_p M_N^{2-p-p\varepsilon} \frac{1}{M_{N-1-[\alpha]}^{3-2p-p\varepsilon}}<\frac{c_p}{M_N^{1-p}}<c_p<\infty.
\end{eqnarray*}

Let $0<p<1$ and $(x,y)\in (I_{s_1}\backslash I_{s_1+1})\times I_{N}$ and $ \varepsilon$ be real number, so that $ 2/p-1-\varepsilon >0.$ Analogously, we can prove that
\begin{eqnarray} \label{9aaaa}
&&\int_{\overline{I_{N}}\times {I_{N}}}\left\vert \frac{ S_{k,l}a}{(kl)^{2/p-1}}\right\vert ^{p}d\mu \times d\mu  \leq\frac{M^{1-p-p\varepsilon}_N}{k^{2-p}l^{2-p-p\varepsilon}}
\overset{N-1}{\underset{s_1=0}{\sum}}\frac{M^p_{s_1}}{M_{s_1}}<\frac{c_p M^{1-p-p\varepsilon}_N}{k^{2-p}l^{2-p-p\varepsilon}}.
\end{eqnarray}

It follows that
\begin{eqnarray*}
&&\underset{2^{-\alpha }\leq k/l\leq 2^{\alpha },  \ 
k,l\geq M_{N-1-[\alpha]}}{\sum}\int_{ \overline{I_{N}}\times {I_{N}}}\left\vert \frac{ S_{k,l}a}{(kl)^{2/p-1}}\right\vert^p d\mu \times d\mu \\
&\leq&c_p M_N^{1-p-p\varepsilon}
\sum_{k=M_{N-1-[\alpha]}}^{\infty}\frac{1}{k^{2-p}}\sum_{l=M_{N-1-[\alpha]}}^{\infty}\frac{1}{l^{2-p-p\varepsilon}}<\frac{c_p}{M_N^{1-p}}<c_p<\infty.
\end{eqnarray*}
Let $0<p<1$ and $(x,y)\in (I_{s_1}\backslash I_{s_1+1})\times (I_{s_2}\backslash I_{s_2+1}).$ Then by using Lemma \ref{dntwo2.1} we get that
\begin{equation}\label{13two}
\left (\frac{\left\vert S_{m,n}a\left( x,y\right) \right\vert }{ \left( mn\right)^{2/p-1}}\right )^p\leq \frac{M_N^{2-2p}M^p_{s_1}M^p_{s_2}
}{\left(mn\right)^{2-p}}.
\end{equation}
In view of (\ref{2}) and (\ref{13two}) we can conclude that
\begin{eqnarray} \label{12}
&&\int_{\overline{I_{N}}\times \overline{I_{N}}}\left (\frac{\left\vert S_{m,n}a \right\vert }{ \left( mn\right)^{2/p-1}}\right )^p d\mu \times d\mu =\frac{M_N^{2-2p}
}{\left(mn\right)^{2-p}}\overset{N-1}{\underset{s_1=0}{\sum }}\overset{N-1}{\underset{s_2=0}{\sum }} \int_{({I_{s_1}\backslash I_{s_1+1}})\times (I_{s_2}\backslash I_{s_2+1})}\left\vert \widetilde{S}_{p}^{\ast
}a\right\vert^{p}d\mu \times d\mu \\ \notag
&\leq &\frac{M_N^{2-2p}
}{\left(mn\right)^{2-p}} \overset{N-1}{\underset{s_1=0}{\sum }}\overset{N-1} {\underset{s_2=0}{\sum }}\frac{M_{s_1}^pM_{s_2}^p}{M_{s_1}M_{s_2}}\leq \frac{M_N^{2-2p}
}{\left(mn\right)^{2-p}}.  
\end{eqnarray}
Hence,
\begin{eqnarray*}
&&\underset{2^{-\alpha }\leq k/l\leq 2^{\alpha }, \ \ k,l\geq M_{{N-1-[\alpha]}}}{\sum}\int_{\overline{I_{N}}\times \overline{I_{N}}}\left\vert \frac{ S_{k,l}a}{(kl)^{2/p-1}}\right\vert^p d\mu \times d\mu \\
&\leq&c_p M_N^{2-2p}\sum_{k=M_{N-1-[\alpha]}}^{\infty}\frac{1} {k^{2-p}} \sum_{l=M_{N-1-[\alpha]}}^{\infty}\frac{1}{l^{2-p}}\leq c_pM_N^{2-2p} \frac{1}{M_{N-1-[\alpha]}^{1-p}}\frac{1}{M_{N-1-[\alpha]}^{1-p}}\leq c_p<\infty.
\end{eqnarray*}

The proof of part a) is complete.
		
Under the condition (\ref{dir222}) there exists an
increasing sequence of positive integers $\left\{ \alpha _{k}:\text{ }%
k\geq 0\right\} $ such that $\alpha _{0}\geq 2,$ \ $\alpha_k+[\alpha]+1<\alpha_{k+1}$ \ and 
\begin{equation}\label{2aaaa}
\sum_{k=0}^{\infty}{\Phi^{-p/4}\left(M_{\alpha _{k}},M_{\alpha_k}\right)}<\infty.  
\end{equation}

Let 
\begin{equation*}
f_{n,n}=\sum_{\left\{ k;\text{ }\alpha _{k}+[\alpha]+1<n\right\} }\lambda
_{k}a_{k},
\end{equation*}
where 
$\lambda _{k}:=\left(m_{\alpha_k}...m_{\alpha_k+[\alpha]}\right)^{2/p-2}{\Phi ^{-1/4}\left(M_{\alpha _k},M_{\alpha_k}\right)}$
and
\begin{equation*}
a_{k}\left(x,y\right):=M_{\alpha_{k}+[\alpha]+1}^{2/p-2}\left(
D_{M_{\alpha_k+[\alpha]+1}}\left(x\right)-D_{M_{\alpha _{k}}}\left(
x\right) \right) \left( D_{M_{\alpha _{k}+[\alpha]+1}}\left( y\right)
-D_{M_{\alpha _{k}}}\left( y\right) \right).
\end{equation*}

Since
\begin{equation*}
S_{2^{n},2^{n}}a_{k}=\left\{ 
\begin{array}{l}
a_{k}\text{, \qquad } \alpha _{k}+[\alpha]+1<n, \\ 
0\text{, \qquad } \alpha _{k}+[\alpha]+1\geq n,%
\end{array}
\right.
\end{equation*}
\begin{eqnarray*}
&&\text{supp}(a_{k})=I_{\alpha _{k}}^2, \text{ \qquad } \ \ \ \int_{I_{\alpha_{k}}^2}a_k d\mu=0, \text{ \qquad } \ \ \ \left\| a_{k}\right\| _{\infty }\leq M_{\alpha_k}^{2/p} =(\mu(\text{supp}a_{k}))^{-1/p}
\end{eqnarray*}
from Proposition 1 and (\ref{2aaaa}) we obtain that $f\in H_{p}^{\square }(G_m^{2}).$ It is obvious that 

\begin{eqnarray}  \label{5aaaa}
\widehat{f}(i,j)=\left\{ 
\begin{array}{l}
\frac{M_{\alpha _k}^{2/p-2}}{\Phi^{1/4}\left( M_{\alpha_k},M_{\alpha_k}\right)}, \text{ \ if \ }\left( i,j\right) \in
\left\{M_{\alpha_k},...,M_{\alpha_k+[\alpha]+1}-1\right\}^2,\text{ \ \ \ }i,j\in \mathbb{N} \\ 
0,\text{ \ if \ }\left(i,j\right) \notin \bigcup\limits_{k=1}^{\infty
}\left\{M_{\alpha_k},...,M_{\alpha_k+[\alpha]+1}-1\right\}^2.
\end{array}
\right.  \notag
\end{eqnarray}

Let\textbf{\ } $M_{\alpha_k}<m,n<M_{\alpha_k+[\alpha]+1}$. Then

\begin{eqnarray}\label{13daaaa}
&&S_{m,n}f\left( x,y\right)=\sum_{\eta =0}^{k-1}\sum_{i=M_{\alpha _{\eta }}}^{M_{\alpha _{\eta
}+1}-1}\sum_{j=M_{\alpha_{\eta }}}^{M_{\alpha _{\eta }+1}-1}\widehat{f}
(i,j)w_{i}\left( x\right) w_{j}\left( y\right)+\sum_{i=M_{\alpha _{k}}}^{m-1}\sum_{j=M_{\alpha _{k}}}^{n-1}\widehat{f}
(i,j)w_{i}\left( x\right) w_{j}\left( y\right)  
\end{eqnarray}
\begin{eqnarray*}
&=&\sum_{\eta =0}^{k-1}\sum_{i=M_{\alpha_{\eta }}}^{M_{\alpha _{\eta
}+1}+[\alpha]-1}\sum_{j=M_{\alpha_{\eta}}}^{M_{\alpha _{\eta }+1}+[\alpha]-1}\frac{M^{2/p-2}_{\alpha_{\eta}}w_{i}\left( x\right) w_{j}\left( y\right)}{\Phi ^{1/4}\left(M_{\alpha _{\eta}},M_{\alpha _{\eta }}\right) }+\sum_{i=M_{\alpha_k}}^{m-1}\sum_{j=2^{\alpha _{k}}}^{n-1}\frac{M^{2/p-2}_{\alpha_k}w_i\left( x\right) w_j\left( y\right)}{\Phi^{1/4}\left(M_{\alpha _{k}},M_{\alpha _{k}}\right)}  \notag \\
&=&\sum_{\eta =0}^{k-1}\frac{M^{2/p-2}_{\alpha_{\eta }}}{\Phi^{1/4}\left(M_{\alpha_{\eta}},M_{\alpha _{\eta }}\right) }\left(
D_{M_{\alpha _{\eta }+[\alpha]+1}}\left( x\right) -D_{M_{\alpha _{\eta }}}\left(
x\right) \right) \left( D_{M_{\alpha_{\eta }+[\alpha]+1}}\left( y\right)
-D_{M_{\alpha _{\eta }}}\left( y\right) \right)  \notag \\
&&+\frac{M^{2/p-2}_{\alpha_k}}{\Phi ^{1/4}\left(M_{\alpha
_{k}},M_{\alpha _{k}}\right) }\left( D_{m}\left( x\right) -D_{M_{\alpha_k}}\left( x\right) \right) \left( D_{n}\left( y\right) -D_{M_{\alpha_{k}}}\left( y\right) \right):=I+II.  \notag
\end{eqnarray*}

Let $\left( x,y\right) \in \left( G\backslash I_{1}\right) \times \left(G\backslash I_{1}\right),$ $m,n\in \mathbb{N}_{n_{0}},$ such that $M_{\alpha _{k}}<m,n<2^{\alpha}M_{\alpha _{k}}$. Since  $\alpha _{k}\geq 2$ $\left( k\in \mathbb{N}\right),$ if we combine (\ref{1dn})-(\ref{2dn}) it follows that
$D_{M_{\alpha_k}}\left( x\right)=D_{M_{\alpha_k}}\left(y\right)=0$
and
\begin{eqnarray}  \label{13aaaaatwo}
\left\vert II\right\vert=\frac{M_{\alpha_k}^{2/p-2}}{\Phi ^{1/4}\left({M_{\alpha_k}},{M_{\alpha_k}}\right)}\left\vert w_m\left( x\right)w_1\left(x\right)D_1\left(x\right) w_n \left( y\right)D_1\left( y\right)\right\vert=\frac{M_{\alpha_k}^{2/p-2}}{\Phi ^{1/4}\left(M_{\alpha_k},M_{\alpha_k}\right)}. 
\end{eqnarray}
By applying (\ref{1dn}) and the condition $\alpha _{n}\geq 2$ $\left( n\in 
\mathbb{N}\right) $ for $I$ we have that

\begin{equation} \label{13btwo}
I=\sum_{\eta =0}^{k-1}\frac{M_{\alpha _{\eta} }^{2/p-2}\left( D_{M_{\alpha _{\eta }+1+1}}\left( x\right) -D_{M_{\alpha _{\eta }+1}}\left( x\right) \right) \left(D_{M_{\alpha _{\eta }+1}}\left(y\right)-D_{M_{\alpha _{\eta }+1}}\left(
	y\right) \right) }{\Phi
^{1/4}\left( M_{\alpha _{\eta }+1,M_{\alpha _{\eta }+1}}\right)}=0.  
\end{equation}
By combining (\ref{13aaaaatwo}) and (\ref{13btwo}) for $M_{\alpha _{k}}<m,n<2^{\alpha}M_{\alpha _{k}}$ we get that

\begin{eqnarray} \label{13aaaabbtwo}
&&\left\Vert S_{m,n}f \right\Vert _{weak-L_p(G_m^2)}  
\end{eqnarray}
\begin{eqnarray}
&\geq &\frac{M_{\alpha_{k}}^{ 2/p-2}}{2\Phi ^{1/4}\left(
M_{\alpha _{k}},M_{\alpha _{k}}\right) }\left( \mu \left\{ \left( x,y\right)\in \left( G\backslash I_{1}\right) \times \left( G\backslash I_{1}\right):\left\vert S_{m,n}f\left( x,y\right) \right\vert \geq \frac{M_{\alpha_k}^{2/p-2}}{2\Phi ^{1/4}\left( M_{\alpha _{k}},M_{\alpha_{k}}\right) }\right\} \right) ^{1/p}  \notag \\
&\geq &\frac{M_{\alpha_k}^{2/p-2} }{2\Phi ^{1/4}\left(
M_{\alpha _{k}},M_{\alpha _{k}}\right) }\mu \left( \left( G_m\backslash
I_{1}\right)\right) \geq \frac{c_{p}M_{\alpha _{k}}^{2/p-2 }}{\Phi ^{1/4}\left( M_{\alpha_{k}},M_{\alpha _{k}}\right) }.  \notag
\end{eqnarray}

According to (\ref{1atwo}), (\ref{dir222}) and (\ref{13aaaaatwo})-(\ref{13aaaabbtwo}) we can conclude
that

\begin{eqnarray} \label{14}
&&\underset{{ \{\left(k,l\right):\ \ 2^{-\alpha }\leq k/l\leq 2^{\alpha }\}}}{\sum }\frac{\left\Vert S_{m,n}f\right\Vert _{weak-L_{p}}^{p}\Phi \left( m,n\right) }{\left( mn\right)^{2-p}} 
\geq \underset{M_{\alpha_k}<m,n\leq 2^{\alpha}M_{\alpha _{k} }}{\sum }\frac{\left\Vert S_{m,n}f\right\Vert _{weak-L_{p}}^{p}\Phi \left( m,n\right)}{\left( mn\right)^{2-p}} \\ \notag
\end{eqnarray}
\begin{eqnarray}
&\geq &\frac{c_{p}\Phi \left(M_{\alpha _{k}},M_{\alpha _{k}}\right) }{
M_{\alpha_{k}}^{4-2p}}\underset{M_{\alpha _{k}}<m,n\leq
2^{\alpha}M_{\alpha _{k}}}{\sum }\left\Vert S_{m,n}f\right\Vert
_{weak-L_{p}}^{p} \\ \notag
&\geq &\frac{c_{p}\Phi \left(M_{\alpha_k},M_{\alpha _{k}}\right) }{
M_{\alpha_{k}}^{4-2p}}\underset{M_{\alpha_{k}-1}<m,n\leq
2^{\alpha}M_{\alpha_k-1}, \ n\in \mathbb{N}_{n_{0}}}{\sum }\left\Vert S_{m,n}f\right\Vert
_{weak-L_{p}}^{p} \\ \notag
&\geq &\frac{c_{p}\Phi \left(M_{\alpha _{k}},M_{\alpha _{k}}\right) }{
M_{\alpha _{k}}^{ 4-2p }}\frac{M_{\alpha _{k} }^{2-2p}}{\Phi ^{1/4}\left(M_{\alpha _{k}},M_{\alpha _{k}}\right) }\underset{M_{\alpha_{k}-1}<m,n\leq 2^{\alpha}M_{\alpha _{k}-1}, \  n\in \mathbb{N}_{n_{0}}}{\sum }1 \\ \notag
&\geq &\frac{c_{p}\Phi ^{3/4}\left(M_{\alpha_k},M_{\alpha _{k}}\right) }{M^2_{\alpha _{k}}}M_{\alpha _{k}-1}^{2}\geq c_{p}\Phi ^{3/4}\left( M_{\alpha _{k}},M_{\alpha _{k}}\right)
\rightarrow \infty,\quad\text{as}\text{\quad }k\rightarrow \infty .
\end{eqnarray}

The proof is complete.
\end{proof}

\section{\textbf{OPEN PROBLEMS}}

In this chapter We state some open problems, which can be interesting for the researchers, who work in this area. First one reads as:
\begin{Conjecture} \label{Th2}
	Let  $f\in H_{1}^{\square}(G_m^{2}),$  $2^{-\alpha}<m/n\leq 2^{\alpha}.$ Then there exists an absolute constant $ c, $ such that
	\begin{equation*}
	\left\Vert S_{m,n}f-f\right\Vert_{{H_1^{\square}(G_m^2)}}\leq c{k^2}\omega_{H_1^{\square}(G_m^2)} \left(\frac{1}{M_k},f\right),
	\end{equation*}
	where $S_{m,n}f$ denotes $(m,n)$-th partial sum of the two-dimensional Vilenkin-Fourier series of $f.$
\end{Conjecture}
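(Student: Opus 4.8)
The plan is to run the proof of Theorem \ref{Th2} at the endpoint $p=1$, replacing its single analytic input --- Theorem \ref{Th1}(a), available only for $0<p<1$ --- by an endpoint operator estimate carrying a logarithmic loss. Concretely, I would first establish
\begin{equation*}
\left\Vert S_{m,n}f\right\Vert_{H_1^{\square}(G_m^2)}\leq c\,k^2\left\Vert f\right\Vert_{H_1^{\square}(G_m^2)},\qquad M_k<m,n\leq M_{k+1+[\alpha]},\ \ 2^{-\alpha}<m/n\leq 2^{\alpha},
\end{equation*}
which is the substitute for the bound $\|S_{m,n}f\|_{H_p^{\square}}\leq c_pM_k^{2/p-2}\|f\|_{H_p^{\square}}$ used in Theorem \ref{Th2}: the power $M_k^{2/p-2}$ degenerates to $1$ at $p=1$ and is replaced by $k^2\sim(\log_2 M_k)^2$, in accordance with the $\log^{2[p]}$ factors appearing in \eqref{thtwo} and \eqref{1} at $p=1$. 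Granting this, the conjecture follows exactly as the last display of Theorem \ref{Th2}: since $S_{m,n}S_{M_k,M_k}=S_{M_k,M_k}$ for $m,n>M_k$, one writes $S_{m,n}f-f=S_{m,n}(S_{M_k,M_k}f-f)+(S_{M_k,M_k}f-f)$, bounds the first summand by $ck^2\,\omega_{H_1^{\square}(G_m^2)}(1/M_k,f)$ via the displayed estimate, and recognizes the second as $\omega_{H_1^{\square}(G_m^2)}(1/M_k,f)$ itself, so that $\|S_{m,n}f-f\|_{H_1^{\square}}\leq(ck^2+1)\,\omega_{H_1^{\square}}(1/M_k,f)$.

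To prove the displayed operator estimate I would reuse the martingale $f_{\#}=(S_{2^{j},2^{j}}S_{m,n}f:j\in\mathbb N)$ of Theorem \ref{Th2} together with the norm formula of Proposition \ref{lemma3.2.3dim2}. Dominating the maximal function of $f_{\#}$ pointwise and taking $L_1$ norms gives
\begin{equation*}
\left\Vert S_{m,n}f\right\Vert_{H_1^{\square}}\leq\left\Vert\widetilde{S}_{\#}^{\ast}f\right\Vert_1+\sum_{i=|n|}^{|m|}\left\Vert S_{M_i,n}f\right\Vert_1+\left\Vert S_{m,n}f\right\Vert_1.
\end{equation*}
The first term equals $\|f\|_{H_1^{\square}}$, because $S_{M_l,M_l}f$ is the conditional expectation $E_{l,l}f$ and the supremum over $l$ is the martingale maximal function. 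Since $|m|-|n|+1\leq\alpha+2$, it then suffices to prove the two $L_1\leftarrow H_1^{\square}$ bounds $\|S_{M_i,n}f\|_1\leq ck\|f\|_{H_1^{\square}}$ and $\|S_{m,n}f\|_1\leq ck^2\|f\|_{H_1^{\square}}$.

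Both bounds I would obtain from the atomic decomposition (Proposition \ref{lemma1}) and the kernel estimates of Lemma \ref{dntwo2.1}, reducing to $\|S_{M_i,n}a\|_1\leq ck$ and $\|S_{m,n}a\|_1\leq ck^2$ for an atom $a$ supported on $I_N\times I_N$. The structural novelty relative to Theorem \ref{Th1} is that the regularizing factor $m^{\varepsilon}$, absorbed there into $(m+n)^{2/p-2}$ under $2/p-2-\varepsilon>0$, is unavailable at $p=1$; instead I would exploit that $S_{m,n}a=0$ unless $m>M_N$ or $n>M_N$, so that under $2^{-\alpha}<m/n\leq2^{\alpha}$ only atoms with $N\leq k+[\alpha]+1$ contribute. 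For $S_{M_i,n}a$ the first-variable kernel $D_{M_i}=M_i\mathbf 1_{I_i}$ is localized by \eqref{1dn} and produces no shell-sum, while the genuine Dirichlet kernel $D_n$ contributes a shell-sum $\sum_{s_2=0}^{N-1}1\lesssim N\lesssim k$; for $S_{m,n}a$ both kernels are genuine, and the dominant region $(I_{s_1}\backslash I_{s_1+1})\times(I_{s_2}\backslash I_{s_2+1})$, estimated by the third part of Lemma \ref{dntwo2.1} as $|S_{m,n}a|\leq cM_{s_1}M_{s_2}$, contributes $O(1)$ per shell and hence $\lesssim N^2\lesssim k^2$ in total (the mixed regions giving only $\lesssim k$). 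This is exactly the two-dimensional Lebesgue-constant growth $\log m\cdot\log n\sim k^2$.

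The main obstacle is the endpoint character of these estimates: at $p=1$ no geometric decay remains to absorb constants, so every bound is critical and a single superfluous logarithm would destroy the claim. In particular one must verify the counting principle $N\lesssim k$ at the level of the full maximal function of $S_{m,n}a$ --- not merely its $L_1$ norm --- i.e.\ check that the pieces arising in the $f_{\#}$ decomposition inherit the annihilation $S_{m,n}a=0$ for atoms finer than scale $k+[\alpha]+1$, and that the mixed and diagonal ($x,y\in I_N$) contributions in the atomic estimates really stay of order $k$ rather than $k^2$. Handling the staircase terms $\|S_{M_i,n}a\|_1$ uniformly in $|n|\leq i\leq|m|$ and confirming that the tensor interaction produces $k^2$ and never $k^3$ is where the genuine difficulty of the conjecture lies.
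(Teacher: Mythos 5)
The statement you are proving is Conjecture~1 of the paper: it appears in the section of open problems and the paper offers \emph{no} proof of it, so there is nothing to compare your argument against; it must stand on its own. Your overall architecture is the natural one and several steps are genuinely correct: the martingale $f_{\#}=(S_{2^j,2^j}S_{m,n}f)_j$ together with Proposition~\ref{lemma3.2.3dim2} does reduce the $H_1^{\square}$ bound to $L_1$ bounds of at most $\alpha+3$ partial sums, the observation that $S_{m,n}a=0$ forces $N\leq k+1+[\alpha]$ for any contributing atom is the right counting principle, and the final telescoping $S_{m,n}f-f=S_{m,n}(f-S_{M_k,M_k}f)-(f-S_{M_k,M_k}f)$ is exactly how Theorem~\ref{Th2} concludes.

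The gap is in the step you yourself flag but do not close: the atom estimates $\|S_{m,n}a\|_1\leq ck^2$ and $\|S_{M_i,n}a\|_1\leq ck$ do \emph{not} follow from Lemma~\ref{dntwo2.1}, which is the only kernel input you invoke. On the region $\overline{I_N}\times\overline{I_N}$ the third estimate of that lemma indeed gives $O(1)$ per shell pair and hence $O(N^2)=O(k^2)$, but on the mixed region $I_N\times(I_{s_2}\backslash I_{s_2+1})$ the first estimate yields $|S_{m,n}a|\leq cM_N^{2}\,m^{\varepsilon}M_{s_2}M_N^{-1-\varepsilon}$, whose integral over that shell is $c\,m^{\varepsilon}M_N^{-\varepsilon}\approx c\,(M_{k+1+[\alpha]}/M_N)^{\varepsilon}$; for a coarse atom with $N\ll k$ this is exponentially large in $k-N$ and there is no damping factor $(m+n)^{2-2/p}$ left at $p=1$ to absorb it. The same problem occurs, worse, on the diagonal $I_N\times I_N$, which you do not estimate at all. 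What is actually needed there is the refined bound $\int_{I_N}\vert D_m(x-t)\vert\,d\mu(t)\leq c\,(1+\vert m\vert-N)$ for $x\in I_N$, which comes from the kernel decomposition \eqref{2dn} combined with \eqref{1dn} (i.e.\ from the logarithmic growth of the partial Lebesgue constants), not from the size-only estimates of Lemma~\ref{dntwo2.1}. With that input each of the four regions contributes $O((1+k-N)^2)\leq O(k^2)$ and your scheme goes through; without it, the central inequality $\|S_{m,n}f\|_{H_1^{\square}}\leq ck^2\|f\|_{H_1^{\square}}$ is asserted rather than proved, so the proposal is a plausible plan for the conjecture rather than a proof of it.
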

It is also interesting if we can prove that the following is true:
\begin{Conjecture}
a) Let $f\in H_{p}^{\square}(G_m^{2}),$ $2^{-\alpha }\leq m/n\leq 2^{\alpha }$ and
\begin{equation*}
\omega_{H_{1}^{\square}(G_m^{2})} \left( \frac{1}{M_{k}},f\right)=o\left( \frac{1}{k^2}\right),\text{ as\ }k\rightarrow \infty .  
\end{equation*}
Then
\begin{equation*}
\left\Vert S_{m,n}f-f\right\Vert_{H_{1}^{\square}(G_m^{2})}\rightarrow 0,\text{ as }m,n\rightarrow \infty.
\end{equation*}
	
b) (Sharpness) Let $2^{-\alpha}<m/n\leq 2^{\alpha}.$ Then there exists a martingale $f\in H_{1}^{\square}(G_m^{2}),$ such that 
\begin{equation*}
\omega_{H_{p}^{\square}(G_m^{2})} \left( \frac{1}{M_{k}},f\right)=O\left( \frac{1}{k^2}\right),\text{ as \ }k\rightarrow \infty
\end{equation*}
and
\begin{equation*}
\left\Vert S_{m,n}f-f\right\Vert _{1}\nrightarrow
0\text{ as  }m,n\rightarrow \infty,
\end{equation*}
where $S_{m,n}f$ denotes $(m,n)$-th partial sum of the two-dimensional Vilenkin-Fourier series of $f.$
\end{Conjecture}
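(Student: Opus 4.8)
The plan is to regard this as the endpoint $p=1$ counterpart of Theorem \ref{Th2} and its $0<p<1$ convergence corollary, and to reduce part a) to the single quantitative bound asserted in the first Conjecture above, namely
\[
\left\Vert S_{m,n}f-f\right\Vert_{H_1^{\square}(G_m^2)}\leq c\,k^2\,\omega_{H_1^{\square}(G_m^2)}\left(\frac{1}{M_k},f\right),\qquad 2^k<m,n\leq 2^{k+1+[\alpha]}.
\]
Granting this, part a) is immediate: under $\omega_{H_1^{\square}}(1/M_k,f)=o(1/k^2)$ one gets $\Vert S_{m,n}f-f\Vert_{H_1^{\square}}\leq c\,k^2\cdot o(1/k^2)\to 0$. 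The reduction itself follows the proof of Theorem \ref{Th2} verbatim: write $S_{m,n}f-f=S_{m,n}(S_{M_k,M_k}f-f)+(S_{M_k,M_k}f-f)$ and use that, on the block $2^k<m,n\leq 2^{k+1+[\alpha]}$, the operator $S_{m,n}$ has $H_1^{\square}\to H_1^{\square}$ norm of order $k^2$.

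Establishing that operator-norm bound is where the real work lies, and where the method of Theorem \ref{Th1} breaks down. At $p=1$ we have $2/p-2=0$, so the relevant maximal operator degenerates to $\widetilde S^{\ast,1}f=\sup_{m,n}|S_{m,n}f|$, which is \emph{not} bounded from $H_1^{\square}$ into $L_1$. I would therefore abandon the pointwise supremum and instead work from the summed, logarithmically weighted inequality --- precisely the $[p]=1$ case of \eqref{thtwo}, in which the factor $1/(\log n\log m)$ enters. Concretely I would revisit the kernel bounds of Lemma \ref{dntwo2.1} at the borderline exponent: the geometric sum $\sum_{s\geq N}M_s^{-\varepsilon}$ that produced the gain $M_N^{-\varepsilon}$ for $\varepsilon>0$ degenerates, as $\varepsilon\to 0$, into a sum of order $k$ equal terms in each variable, hence a factor $k^2\sim(\log M_k)^2$ in the double integral. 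Feeding this refined estimate into the martingale decomposition $f_{\#}=(S_{2^j,2^j}S_{m,n}f)$ exactly as in Theorem \ref{Th2}, and computing the $H_1^{\square}$ norm via Proposition \ref{lemma3.2.3dim2}, should deliver the desired order $k^2$.

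For the sharpness statement b) I would start from the construction used in the $0<p<1$ sharpness argument --- atoms $a_k$ supported on $I_{\alpha_k}^2$ of the diagonal product form $\big(D_{M_{\alpha_k+1}}(x)-D_{M_{\alpha_k}}(x)\big)\big(D_{M_{\alpha_k+1}}(y)-D_{M_{\alpha_k}}(y)\big)$, whose surviving partial-sum term on $(G\setminus I_1)^2$ is isolated by the vanishing $D_{M_{\alpha_k}}(x)=D_{M_{\alpha_k}}(y)=0$ coming from \eqref{1dn}--\eqref{2dn}. The catch is that at $p=1$ this single surviving term equals $\lambda_k\,w_{M_{\alpha_k}}(x)w_{M_{\alpha_k}}(y)$, whose $L_1$ norm is exactly $|\lambda_k|$; keeping $f\in H_1^{\square}$ forces $\sum_k|\lambda_k|<\infty$ and hence $|\lambda_k|\to 0$, so a \emph{single} atom can no longer obstruct convergence. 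The construction must therefore be made to accumulate: one has to superpose atoms so that a coherent sum of many small terms, rather than one large term, produces a partial sum of $L_1$ norm bounded below, while the atomic tail still yields $\omega_{H_1^{\square}}(1/M_k,f)=O(1/k^2)$. This coherent lower bound is exactly the phenomenon behind the sharpness of the $(\log)^2$ growth of the two-dimensional partial-sum operator on $H_1^{\square}$.

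The main obstacle is thus twofold, and both halves live at the endpoint. In part a) there is no bounded maximal operator to lean on, and the precise order $k^2\sim(\log M_k)^2$ of the logarithmic loss must be tracked honestly through the two-dimensional atomic decomposition, neither overcounting (which would spoil the power of $k$) nor discarding the genuine double-logarithmic divergence. In part b) the naive one-atom obstruction fails, so producing a martingale whose modulus is calibrated to the threshold $O(1/k^2)$ yet whose partial sums diverge in $L_1$ requires a genuinely superposed counterexample matched to that same $(\log)^2$ rate; this matching is the crux and the reason the statement is currently only conjectural.
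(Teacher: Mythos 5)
The first thing to say is that the paper offers no proof of this statement: it appears in Section 4 as an open conjecture, immediately after (and explicitly resting on) the conjectural endpoint estimate $\Vert S_{m,n}f-f\Vert_{H_1^{\square}(G_m^2)}\leq c\,k^2\,\omega_{H_1^{\square}(G_m^2)}(1/M_k,f)$. Your proposal reproduces exactly that logical structure: part a) is reduced to that same unproven inequality, so what you have written is a reduction of one open problem to another, not a proof. The gap is concrete. The claimed $H_1^{\square}\to H_1^{\square}$ bound of order $k^2$ for $S_{m,n}$ on the block $2^k<m,n\leq 2^{k+1+[\alpha]}$ is never established, and your heuristic for it (letting $\varepsilon\to 0$ in Lemma \ref{dntwo2.1} turns the geometric sum $\sum_{s\geq N}M_s^{-\varepsilon}$ into roughly $k$ comparable terms per variable, hence a loss of $k^2$) does not survive being fed into the machinery of Theorems 1 and 2: that machinery integrates the \emph{normalized} maximal operator $\sup_{m,n}|S_{m,n}a|/(m+n)^{2/p-2}$ over $\overline{I_N\times I_N}$, and at $p=1$ the normalization disappears while the supremum still runs over all $(m,n)$, so a pointwise kernel estimate with a fixed logarithmic loss does not control the supremum. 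The negative result recalled in the introduction --- that for every $0<p\leq 1$, including $p=1$, there is $f\in H_p^{\square}(G_m^2)$ with unbounded partial sums in $L_p(G_m^2)$ --- shows that any correct endpoint argument must exploit the cone restriction $2^{-\alpha}\leq m/n\leq 2^{\alpha}$ in an essential way (as the weight $1/(\log n\log m)$ in \eqref{thtwo} already suggests), and your sketch does not do this.

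For part b) you candidly observe that the paper's $0<p<1$ counterexample collapses at $p=1$: the condition $\sum_k|\lambda_k|<\infty$ forces the single surviving term $\lambda_k w_{M_{\alpha_k}}(x)w_{M_{\alpha_k}}(y)$ to tend to $0$ in $L_1$, so one atom can no longer obstruct convergence, and a coherent superposition of many atoms is needed. But you do not construct such a superposition, verify that its modulus of continuity is $O(1/k^2)$, or prove the required $L_1$ lower bound for its partial sums. Both halves of your argument therefore terminate in a description of what \emph{would} need to be done. This is a reasonable research plan that correctly diagnoses why the endpoint case is hard, but it is not a proof; the statement remains exactly as open as the paper says it is.
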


Strong summability result for the two-dimensional Vilenkin-Fourier series in the case when $p=1$ and $2^{-\alpha }\leq k/l\leq 2^{\alpha }$ is also open problem:
\begin{Conjecture}
Let $f\in H_{1}\left(G^2_m\right).$ Then there exists an absolute constant $c,$ such that 	
\begin{equation*}
\underset{n,m\geq 2}{\sup }\frac{1}{\log n\log m}\underset{2^{-\alpha }\leq k/l\leq 2^{\alpha },\text{ }\left(k,l\right) \leq \left( n,m\right) }{\sum }\frac{\left\Vert S_{k,l}f\right\Vert_{H_1{\left( {G^2_m} \right)}}}{ kl}\leq c\left\Vert f\right\Vert _{H_1\left( {G^2_m} \right)},
\end{equation*}
where $S_{k,l}f$ denotes $(k,l)$-th partial sum of the two-dimensional Vilenkin-Fourier series of $f.$
\end{Conjecture}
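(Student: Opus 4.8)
The plan is to follow the scheme of part a) of the preceding theorem, reducing the assertion to a single atom and then summing over the quasi-diagonal dyadic blocks; the feature forced by $p=1$ is that the martingale norm $\|\cdot\|_{H_1^{\square}}$ now sits \emph{inside} the sum, so one must control the whole maximal function of the partial sums, and the gain available in the range $0<p<1$ is lost.

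First I would apply the atomic decomposition of Proposition \ref{lemma1}. Writing $f=\sum_j\mu_j a_j$ with $\sum_j|\mu_j|\backsim\|f\|_{H_1^{\square}(G_m^2)}$ and using that for $p=1$ the functional $\|\cdot\|_{H_1^{\square}(G_m^2)}$ is a genuine norm, the sublinearity of $S_{k,l}$ gives
\[
\|S_{k,l}f\|_{H_1^{\square}(G_m^2)}\le\sum_j|\mu_j|\,\|S_{k,l}a_j\|_{H_1^{\square}(G_m^2)}.
\]
Thus it suffices to produce an absolute constant $c$ for which, for every atom $a$ with support $I_N\times I_N$,
\[
\frac{1}{\log n\log m}\underset{2^{-\alpha}\le k/l\le2^{\alpha},\ (k,l)\le(n,m)}{\sum}\frac{\|S_{k,l}a\|_{H_1^{\square}(G_m^2)}}{kl}\le c,
\]
uniformly in $n,m\ge2$ and in the atom; the conjecture then follows upon multiplying by $|\mu_j|$ and summing in $j$.

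Next I would estimate the single-atom norm. By Proposition \ref{lemma3.2.3dim2} one has $\|S_{k,l}a\|_{H_1^{\square}(G_m^2)}=\|\sup_{j}|S_{M_j,M_j}S_{k,l}a|\|_1$, and since $S_{M_j,M_j}S_{k,l}=S_{\min(M_j,k),\min(M_j,l)}$ the supremum runs over the partial sums $S_{\min(M_j,k),\min(M_j,l)}a$, which vanish as soon as both indices are at most $M_N$. As in the proof of Theorem \ref{Th1} I would split $G_m^2$ into $I_N\times I_N$, the two mixed strips $I_N\times\overline{I_N}$ and $\overline{I_N}\times I_N$, and the corner $\overline{I_N}\times\overline{I_N}$, the latter decomposed into the rectangles $(I_{s_1}\setminus I_{s_1+1})\times(I_{s_2}\setminus I_{s_2+1})$ exactly as in \eqref{two11}. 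On each rectangle every partial sum obeys the same pointwise bound furnished by Lemma \ref{dntwo2.1}, hence so does the maximal function, and integration reduces matters to geometric-type sums in $s_1,s_2$. I would then perform the outer summation over the range $M_{N-1-[\alpha]}<k,l$, $(k,l)\le(n,m)$, where, exactly as in the preceding theorem, the constraint $2^{-\alpha}\le k/l\le2^{\alpha}$ leaves only $O(1)$ companions $l$ for each $k$, so that the harmonic sums $\sum1/k$ and $\sum1/l$ each contribute a single logarithmic factor; the normalization $1/(\log n\log m)$ is precisely tailored to absorb them.

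The hard part will be the endpoint itself. In the range $0<p<1$ the corner estimate rests on the convergence of the geometric series $\sum_{s}M_s^{\,p-1}$ underlying \eqref{two11} and \eqref{13two}; at $p=1$ this series degenerates to $\sum_{s=0}^{N-1}1=N$, so the analogous corner bound for $\|S_{k,l}a\|_{H_1^{\square}(G_m^2)}$ grows like $N^2$ rather than staying bounded. Weighting such a bound by $1/(kl)$ and summing on the diagonal, an appropriate choice of the truncation scales (of order $|n|,|m|\backsim2N$) leaves a factor of order $N$ after division by $\log n\log m$, so the crude kernel bounds of Lemma \ref{dntwo2.1} are too lossy to close the argument. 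The real content must therefore come from genuine cancellation in the maximal function $\sup_j|S_{\min(M_j,k),\min(M_j,l)}a|$ across the intermediate scales $M_N<M_j\le\min(k,l)$, yielding a bound on $\|S_{k,l}a\|_{H_1^{\square}(G_m^2)}$ that is essentially independent of the atom scale. Capturing this cancellation at $p=1$, where both the $p$-th power subadditivity and the $L_p$ (rather than $H_1^{\square}$) kernel estimates of the subcritical case are unavailable, is where I expect the essential obstacle to lie, and is presumably why the statement is only conjectured.
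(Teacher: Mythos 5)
This statement is listed in Section 4 of the paper as an open \emph{conjecture}; the paper offers no proof of it, so there is nothing to compare your argument against except the machinery of Theorems 1 and 4. Measured against that machinery, your proposal is an honest reduction scheme rather than a proof, and you say so yourself: the decisive step is missing. Concretely, after reducing to a single atom $a$ supported on $I_N\times I_N$ and splitting $\overline{I_N\times I_N}$ as in \eqref{two9}--\eqref{two11}, everything hinges on a bound for $\|S_{k,l}a\|_{H_1^{\square}(G_m^2)}=\|\sup_j|S_{\min(M_j,k),\min(M_j,l)}a|\|_1$ that remains summable against $1/(kl)$ after division by $\log n\log m$. Lemma \ref{dntwo2.1} only gives the pointwise bounds used in \eqref{13AAtwo}, and at $p=1$ the sums $\sum_{s=0}^{N-1}M_s^{p-1}$ degenerate to $N$, so the corner contribution per pair $(k,l)$ is of order $N^2$ and the logarithmic normalization cannot absorb the extra growth; moreover, controlling the full maximal function over the intermediate scales $M_N<M_j\le\min(k,l)$ in $L_1$ is strictly harder than controlling each individual $\|S_{k,l}a\|_1$, which is already the content of Weisz's inequality \eqref{thtwo} at $p=1$. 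You identify this obstruction correctly but supply no mechanism (no cancellation lemma, no sharper kernel estimate, no interpolation endpoint) to overcome it, so the conjecture is not established.

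Two further cautions about the part of the argument you do write down. First, the inequality $\|S_{k,l}f\|_{H_1^{\square}}\le\sum_j|\mu_j|\,\|S_{k,l}a_j\|_{H_1^{\square}}$ requires that $\sum_j\mu_j S_{k,l}a_j$ converge to $S_{k,l}f$ in $H_1^{\square}$, which itself presupposes some a priori boundedness of $S_{k,l}$ on $H_1^{\square}$; the paper breaks the analogous circularity in Theorem \ref{Th1} by first establishing $L_\infty\to L_\infty$ boundedness of the maximal operator before invoking Proposition \ref{lemma1}, and you would need a comparable preliminary step here. Second, the identity $S_{M_j,M_j}S_{k,l}=S_{\min(M_j,k),\min(M_j,l)}$ and the vanishing $S_{k,l}a=0$ for $k,l\le M_N$ are fine, but they do not by themselves produce any decay in $j$; without a quantitative gain at each intermediate scale the supremum over $j$ cannot be summed. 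Until a bound of the form $\|S_{k,l}a\|_{H_1^{\square}}\le c$ (uniform in the atom and suitably decaying in $k,l$) is proved, the statement remains exactly what the paper calls it: an open problem.
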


We also state some interesting open problems without any conjectures
\begin{Problem} For any
$\begin{matrix}
f\in {H_p}\left( {G^2_m} \right) & (0<p\leq 1)  \\
\end{matrix},$ 
is it possible to find strong convergence theorems for partial sums $S_{m,n}$ with respect to the two-dimensional Vilenkin-Fourier series without any restriction on indexes $(m,n)?$
\end{Problem} 

\begin{Problem}
For any
$\begin{matrix}
f\in {H_p}\left( {G^2_m} \right) & (0<p\leq 1)  \\
\end{matrix},$ 
is it possible to find necessary and sufficient conditions in terms of the two-dimensional modulus of continuity of martingale 
$
f\in {H_p}\left( {G^2_m} \right)$ \ $\left( 0<p\le 1 \right),$ for which 
\[\begin{matrix}
{{\left\| {{S}_{{{k}_{j}},{{l}_{j}}}}f-f \right\|}_{{H_p}\left( {G^2_m} \right)}}\to 0, & as & j\to \infty,  \\
\end{matrix}\]
where $S_{k_j,l_j}f$ denotes $(k_j,l_j)$-th partial sum of the two-dimensional Vilenkin-Fourier series of $f?$
\end{Problem} 

\begin{Problem}
For any
$\begin{matrix}
f\in {H_p}\left( {G^2_m} \right) & (0<p\leq 1)  \\
\end{matrix},$  is it possible to find necessary and sufficient conditions for the indexes 
$\left( {k_j},{l_j} \right),$ for which 
\[\begin{matrix}
{{\left\| {{S}_{{{k}_{j}},{{l}_{j}}}}f-f \right\|}_{{H_p}\left( {G^2_m} \right)}}\to 0, & as & j\to \infty, \\
	\end{matrix}\]
where $S_{k_j,l_j}f$ denotes $(k_j,l_j)$-th partial sum of the two-dimensional Vilenkin-Fourier (Walsh-Fourier) series of $f?$
\end{Problem}

\begin{Problem} For any
$\begin{matrix}
f\in {H_p}\left( {G^2_m} \right) & (0<p\leq 1)  \\
\end{matrix},$ 
is it possible to find necessary and sufficient conditions for the indexes 
$\left( {k_j},{l_j} \right),$ for which 
\[{{\left\| \underset{j\in \mathbb{N}}{\mathop{\sup }}\,\left| {S_{k_j,l_j}}f \right| \right\|}_{{{H}_{p}}\left( {G^2_m} \right)}}\le {c_p}{{\left\| f \right\|}_{{H_p}\left( {G^2_m}\right)}},\]
where $S_{k_j,l_j}f$ denotes $(k_j,l_j)$-th partial sum of the two-dimensional Vilenkin-Fourier (Walsh-Fourier) series of $f?$
\end{Problem} 

\textbf{Acknowledgment:} The author would like to thank the referee for helpful suggestions, which absolutely improve the final version of our paper.

\end{document}